\DeclareMathOperator{\supp}{supp}
\theoremstyle{plain}
\newtheorem{theorem}{\bf Theorem}[section]
\newtheorem{proposition}[theorem]{\bf Proposition}
\newtheorem{lemma}[theorem]{\bf Lemma}
\newtheorem{corollary}[theorem]{\bf Corollary}
\theoremstyle{remark}
\newtheorem{remark}[theorem]{\bf Remark}
\newcommand{\ud}[0]{\,\mathrm{d}}
\newcommand{\esssup}[0]{\operatornamewithlimits{ess\,sup}}
\newcommand{\essinf}[0]{\operatornamewithlimits{ess\,inf}}
\def\supp{\operatorname{supp}}
\def\esssup{\operatornamewithlimits{ess\,sup}}
\def\essinf{\operatornamewithlimits{ess\,inf}}
\def\R{\mathbb R}
\begin{document}
	\title[Restricted weak type extrapolation for $A_{\vec{p}}$ weights]{Endpoint multilinear restricted weak type extrapolation theorem}

\author[K. Li]{Kangwei Li}
\address[K. Li]{Center for Applied Mathematics, Tianjin University, Weijin Road 92, 300072 Tianjin, China}
\email{kli@tju.edu.cn}

\author[T. Luque ]{Teresa Luque}
\address[T. Luque]{Departamento de Análisis Matemático y Matemática Aplicada\\ Universidad Complutense (Spain)}
\email{t.luque@ucm.es}

\author[S. Ombrosi]{Sheldy Ombrosi}
\address[S. Ombrosi]{Departamento de Análisis Matemático y Matemática Aplicada\\ Universidad Complutense (Spain)\&
Instituto de Matemática. Universidad Nacional del Sur - CONICET Argentina}
\email{sombrosi@ucm.es}

\thanks{The first author author was supported by the National Natural Science Foundation of China through project numbers 12222114 and 12001400. 
The second and third authors were supported by Spanish Ministerio de Ciencia e Innovaci\'on grant PID2020-113048GB-I00.}
	
	\keywords{Weighted inequalities; multilinear Muckenhoupt weights; Rubio de Francia extrapolation;  restricted weak type.}

	\maketitle
	
	\begin{abstract}
		In this paper we present a generalization in the context of multilinear Muckenhoupt classes of the endpoint extrapolation theorem on restricted weights due to Carro, Grafakos and Soria \cite{cgs:cgs} . Moreover, our main result is obtained on limited ranges of boundedness and to this aim we introduce a new limited range, off-diagonal extrapolation theorem in the context of restricted weights. In addition, as one of the applications, we prove endpoint estimates of certain bi-sublinear maximal functions associated with the study of return time theorems in ergodic theory. 
	\end{abstract}
	
	\section{Introduction}
	
The Rubio de Francia  extrapolation theorem \cite{RdF:RdF} raises one of the most striking issues concerning weighted inequalities. This theorem posits that an estimate for an operator $T$ on $L^{p_0}(w)$ for a single $p_0$ and all $A_{p_0}$  weights $w$ implies a similar $L^p(w)$ estimate for all $p$ in $(1,\infty)$ and every $w\in A_p$. After this result was first proved, many other versions and improvements have appeared in the literature. A more detailed background can be found in \cite{CU-M-P}. In particular, the operator itself does not play any role in the theorem and the statement can be given in terms of families of pairs of nonnegative measurable functions (see \cite{Cup:Cup}). The extrapolation can also be generalized for the off-diagonal case in which the inequalities are from $L^p(w^p)$ to $L^q(w^q )$ for possibly different values for the exponents $p$ and $q$ and the weight $w$ in the appropriate classes $A_{p,q}$ (see \cite{hms:hms,lmpt:lmpt, duo:duo} ). Although for operators that are unbounded outside a range of the form $(p^{-},p^{+})$ with $1 < p^{-} <p^{+} <\infty$ classical extrapolation does not work, it has been extended to this scenario in \cite{am:am,duo:duo}.
	
	In the multi-variable setting, there are some Rubio de Francia's extrapolation results. In \cite{gm:gm} it was shown that if $T$ is bounded from $L^{p_1}(w_1)\dots \times L^{p_m}(w_m)$ into $L^p(w_1^{\frac{p}{p_1}}\dots w_m^{\frac{p}{p_m}})$ for  $w_i\in A_{p_i}$ and for some fixed exponents $1 <p_1,\dots,p_m<\infty$ with the relation given by
	\begin{equation}\label{re:exponents}
		\frac{1}{p}=\frac{1}{p_1}+\dots +\frac{1}{p_m},
	\end{equation}
	then the same holds for all possible values of $p_j$, $p_j>1$. This result is also obtained in \cite{duo:duo} using by iteration an off-diagonal extrapolation argument. These first results treat  each variable separately with its own Muckenhoupt class of weights and do not really use  the multivariable nature of the problem. In \cite{lmo:lmo} is presented an extrapolation argument using the multilinear Muckenhoupt classes of weights introduced in \cite{loptt:loptt}. Namely, given  $\vec{p}=(p_1,\dots, p_m)$  with the relation \eqref{re:exponents}, $\vec{w}=(w_1,\dots, w_m)$ is in the class $A_{\vec{p}}$ if
	\begin{equation}\label{Apvect}
		\sup_{Q}  \left( \frac{1}{|Q|} \int_{Q} w\right)^{\frac{1}{p}}  \prod_{i=1}^m \left( \frac{1}{|Q|} \int_{Q} w_i^{1-p'_i}\right)^{\frac{1}{p'_i}} <\infty,
	\end{equation}
	where $w=w_1^{\frac{p}{p_1}}\dots w_m^{\frac{p}{p_m}} $ and the exponents verify the relation \eqref{re:exponents}. These classes of weights characterize the boundedness of the sub-multilinear maximal function $\mathcal{M}$ defined by
	\[\mathcal M(f_1,\dots,f_m)(x)=\sup_{x\in Q} \prod_{i=1}^m \left(\frac{1}{|Q|}\int_Q|f_i(y)|\ud y \right).\]
It was also shown in \cite{loptt:loptt} that this is the appropriate class of weights for multilinear Calder\'on-Zygmund operators.

In \cite{lmo:lmo}, the authors exploited the clever off-diagonal argument presented in \cite{duo:duo} to prove multilinear extrapolation for the class $A_{\vec{p}}$. One of the key points there it was to understand that the multilinear condition (\ref{Apvect}) could be ``linearized"  by changing to a weighted measure space (see Lemma 3.2 in \cite{lmo:lmo}). Subsequently, a quantitative extrapolation result for these weights was obtained in \cite{n:n} via a generalization of the Rubio de Francia algorithm to the multilinear setting. The argument in \cite{n:n} allows also one to reach the upper endpoint. For more results concerning the upper endpoint extrapolation in the linear and multilinear context  see also \cite{lmmov, LOR, N-R}.
	
A weakness of Rubio de Francia's theory is that it does not allow to extrapolate up to the (lower) endpoint. More precisely, in the one-variable case it is not possible to obtain an estimate for $p=1$. In \cite{cgs:cgs}, the authors present a solution for this matter introducing a slightly bigger class than $A_{p_0}$, named $\widehat{A}_{p_0}$, for which given a restricted weak-type $(p_0,p_0)$ bounded operator $T:L^{p_0,1}(w)\to L^{p_0,\infty}(w) $, for some $p_0>1$ and every $w\in\widehat{A}_{p_0}$, then $T:L^{1,\frac{1}{p_0}}(w)\to L^{1,\infty}(w)$ for every  $w\in{A}_{1}$. The class $\widehat{A}_{p}$ is closely related to the restricted $A_p$ class, $A_p^{\mathcal{R}}$, which characterizes the restricted weak-type boundedness of the Hardy-Littlewood maximal function $M$ (see \cite{kt:kt} for this characterization).  
	
Extrapolation involving the classes $\widehat{A}_{p}$ and $A_p^{\mathcal{R}}$ is known as restricted weak type Rubio de Francia’s extrapolation and after the pioneering paper  \cite{cgs:cgs}, different applications have been developed around it. In particular, in the multi-variable context this argument allows to extrapolate down to $(1,\dots,1)$, considering hypothesis in each variable (weight) separately. See \cite{cr:cr} and \cite{rothesis}.
	
The purpose of this paper is to establish multi-variable restricted weak type extrapolation results to down to the lower endpoint for vector weights $\vec{w}=(w_1,\dots, w_m)$ associated with restricted weak type estimates. The first result in this direction can be found in \cite[Chapter 5, Theorem 5.2.2]{rothesis} where the $A_{\vec{p}}^{\mathcal{R}}$ class was introduced (see also \cite{P-R} and \cite{B-C-M}). We recall now the definition of theses classes.
	
Let $1\le p_i<\infty$ and $\frac{1}{p}=\frac{1}{p_1}+\dots+\frac{1}{p_m}$. We say $\vec w=(w_1, \cdots, w_m)\in A_{\vec p}^{\mathcal R}$ if 
\begin{equation}\label{eq:defr=1}
\sup_Q \left(\frac 1{|Q|} \int_Q\prod_{i=1}^mw_i^{\frac p{p_i}}\right)^{1/p}\prod_{i=1}^m \| \chi_Q w_i^{-1} \|_{L^{p_i', \infty}\big(\frac{w_i}{|Q|}\big)}<\infty,
\end{equation}where recall that when $p_i=1$ then 
\[
\| \chi_Q w_i^{-1} \|_{L^{p_i', \infty}\big(\frac{w_i}{|Q|}\big)}:= \esssup_Q w_i^{-1}.
\]
Observe that if $p_i=1$ for $i=1,\dots,m$ and therefore $p=\frac{1}{m}$  the class  $A_{\vec{\frac{1}{m}}}^{\mathcal R}$ actually agrees with the class $A_{\vec{\frac{1}{m}}}$  introduced in \cite{loptt:loptt}.
 
 It is proven (see \cite{P-R, rothesis}) $\vec{w} \in A_{\vec{p}}^\mathcal{R}$ if and only if $\mathcal{M}:L^{p_1,1}(w_1)\dots \times L^{p_m,1}(w_m)\to L^p(w_1^{\frac{p}{p_1}}\dots w_m^{\frac{p}{p_m}})$. Moreover, the recent preprint \cite{ro:ro} presents  the first restricted weak-type extrapolation scheme assuming multi-variable conditions (see \cite[Theorem 8.21]{ro:ro}). However, there the extrapolation is only in one component, the starting point is $(p_1,1,...,1)$ ($p_1>1$) to go down to $(1,...,1)$. Finally in the current paper we complete the picture and we can obtain the following result, which represents one of the main contributions of this work.
\begin{theorem}\label{thm:main}
Given measurable functions $f_1, \cdots, f_m$ and $g$. Suppose that for some exponents $1\le p_1, \cdots, p_m<\infty$ with $1/p=\sum_i 1/{p_i}$ and for all $\vec w=(w_1, \cdots, w_m)\in A_{\vec p}^{\mathcal R}$ we have 
\begin{equation}\label{eq:ApR}
\|g\|_{L^{p,\infty}(\prod_{i=1}^m w_i^{p/{p_i}})}\lesssim \prod_{i=1}^m \|f_i\|_{L^{p_i,1}(w_i)}.
\end{equation}
Then for all $(v_1, \cdots, v_m)\in A_{\vec{\frac{1}{m}}}$, 
\begin{equation}\label{eq:conclu}
\|g\|_{L^{1/m,\infty}(\prod_{i=1}^m v_i^{1/{m}})}\lesssim \prod_{i=1}^m \|f_i\|_{L^{1,1/{p_i}}(v_i)}.
\end{equation}
\end{theorem}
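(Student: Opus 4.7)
The plan is to adapt to the multilinear restricted-weak-type setting the Rubio-de-Francia-type construction of Carro--Grafakos--Soria \cite{cgs:cgs}, and to derive Theorem~\ref{thm:main} from a general multilinear off-diagonal restricted weak type extrapolation theorem---itself one of the main contributions of the paper. Below I sketch the direct strategy.

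\emph{Step 1: reduction to characteristic functions.} A direct computation with the Lorentz quasi-norm gives
\[
\|\chi_E\|_{L^{p_i,1}(w_i)}\simeq w_i(E)^{1/p_i},\qquad \|\chi_E\|_{L^{1,1/p_i}(v_i)}\simeq v_i(E).
\]
Applying the layer-cake decomposition $f_i=\sum_{k}2^k\chi_{\{|f_i|>2^k\}}$ and summing over dyadic levels as in \cite{cgs:cgs}, it suffices to prove the following: under the assumption $\|g\|_{L^{p,\infty}(w)}\lesssim \prod_{i=1}^m w_i(E_i)^{1/p_i}$ valid for every $\vec w\in A_{\vec p}^{\mathcal R}$ and every choice of measurable $E_i$, one has $\|g\|_{L^{1/m,\infty}(v)}\lesssim \prod_{i=1}^m v_i(E_i)$ for every $\vec v\in A_{\vec{1/m}}$.

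\emph{Step 2: weight construction and back-substitution.} Given $\vec v$ and the sets $E_i$, I would set
\[
w_i := v_i\cdot (M\chi_{E_i})^{1-p_i}\qquad (i=1,\dots,m),
\]
or, more robustly, a Rubio-de-Francia iterate of the scalar Hardy--Littlewood maximal function built from $\chi_{E_i}$. Since $M\chi_{E_i}\ge 1$ on $E_i$ and $1-p_i\le 0$, these satisfy $w_i(E_i)\le v_i(E_i)$, while $(M\chi_{E_i})^{1-p_i}\in A_1$ with controlled constant. Assuming the key technical claim $\vec w\in A_{\vec p}^{\mathcal R}$, the hypothesis applied to this $\vec w$ yields $\lambda\, w(\{|g|>\lambda\})^{1/p}\lesssim \prod_i v_i(E_i)^{1/p_i}$, with $w=\prod w_i^{p/p_i}$. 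To convert this into a bound in $L^{1/m,\infty}(v)$, I would exploit the pointwise identity $v=\prod v_i^{1/m}=\prod w_i^{1/m}\cdot\prod(M\chi_{E_i})^{(p_i-1)/m}$, Hölder's inequality on the level set $\{|g|>\lambda\}$, and the weak-type $(1,1)$ boundedness of $M$ applied to the $\chi_{E_i}$'s, in order to absorb the maximal-function factors into $v_i(E_i)$ and recover $\lambda\, v(\{|g|>\lambda\})^m\lesssim \prod_i v_i(E_i)$.

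\emph{Main obstacle.} The decisive difficulty is the verification of the multilinear restricted Muckenhoupt condition~\eqref{eq:defr=1} for the candidate $\vec w$. In contrast to the classical $A_{\vec p}$ setting, where the Hölder-based factorization of \cite[Lemma~3.2]{lmo:lmo} cleanly reduces the multilinear expression to a product of scalar $A_p$ quantities, the $L^{p_i',\infty}$ factors in~\eqref{eq:defr=1} do not separate, and one has to balance the multilinear $A_{\vec{1/m}}$ assumption on $\vec v$ against the scalar $A_1$-behaviour of $(M\chi_{E_i})^{1-p_i}$ on each cube. A parallel bookkeeping difficulty arises in Step~2 when combining the scalar bounds produced by the hypothesis into a single weak-type $L^{1/m}$ estimate, since the exponents $p/p_i$ do not match $1/m$ uniformly in $i$. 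Both obstructions are exactly what the new off-diagonal restricted weak type extrapolation theorem announced in the abstract is designed to overcome, and I would prove that theorem first and then specialize it, with the appropriate choice of exponents, to obtain Theorem~\ref{thm:main} at once.
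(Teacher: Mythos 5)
Your outline correctly senses that the proof should hinge on a CGS-type weight construction via maximal functions together with an off-diagonal restricted extrapolation theorem, but as written it has two genuine gaps. First, Step 1 is not a valid reduction. The theorem is about one fixed tuple $(f_1,\dots,f_m,g)$: the hypothesis is an inequality for these particular functions and all $\vec w\in A_{\vec p}^{\mathcal R}$, and there is no operator, so no estimate for the tuples $(\chi_{E_1},\dots,\chi_{E_m},g)$ is available to you. Even if one restates everything for an $m$-sublinear operator, passing back from characteristic functions to general $f_i$ in the conclusion forces a summation of the dyadic pieces inside $L^{1/m,\infty}(\prod_i v_i^{1/m})$, and since $1/m<1$ this space is not normable; this is exactly the endpoint obstruction the paper itself points out in Section \ref{sec:applications}, where such a dyadic summation is carried out only in $L^{p/\alpha,\infty}$ with $p/\alpha>1$, i.e.\ at the hypothesis level, never at the endpoint. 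So the characteristic-function model statement neither follows from the hypothesis nor implies the conclusion.

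Second, the step you flag as the ``key technical claim'' --- that the candidate weights $w_i=v_i(M\chi_{E_i})^{1-p_i}$ (or any analogous one-shot substitution built from the $f_i$) lie in $A_{\vec p}^{\mathcal R}$ --- is precisely the heart of the matter, and the paper does not prove the theorem by validating any such simultaneous substitution. Note that $\vec v\in A_{\vec{\frac{1}{m}}}$ does not give $v_i\in A_1$ componentwise, so there is no coordinate-by-coordinate verification of \eqref{eq:defr=1}; moreover $(M\chi_{E_i})^{1-p_i}$ is a \emph{negative} power of a maximal function and is not an $A_1$ weight, and in the restricted classes the dual-type weights are not in $A_\infty$, which is exactly what blocks the factorization arguments of \cite[Lemma 3.2]{lmo:lmo} that your plan implicitly hopes to imitate. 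The actual proof is iterative, one variable at a time: Proposition \ref{impli-g} (whose proof requires the doubling information of Lemma \ref{lem:doubmu} and the weighted Sawyer-type inequality, Lemma \ref{lem:saw}) shows that after freezing $(w_1,\dots,w_{m-1})$ and changing to the measure $\mu=\big(\prod_{i<m}w_i^{1/p_i}\big)^{\varrho}$, the $m$-th slot of $A_{\vec p,\vec r}^{\mathcal R}$ becomes a linear $\widehat A$-class relative to $\mu$; then Theorem \ref{thm:offdia-g}, a one-variable off-diagonal restricted extrapolation with respect to an $A_\infty$ measure $\mu$ (proved by the CGS/Duoandikoetxea scheme applied to $H=|f|^{r_0}w^{1-\delta_1/r_0}\mu^{-1}$, again via Lemma \ref{lem:saw}), lowers that one exponent, and parts (a) and (b) of Proposition \ref{impli-g} let one repeat in each coordinate; Theorem \ref{thm:main} is the case $r_i=1$, $\alpha_i=1$ after Remark \ref{rem:dfnequiv}. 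Your closing appeal to ``the off-diagonal theorem announced in the abstract'' does not fill the hole: that theorem is a one-variable statement relative to a fixed weighted measure, useful only once the structural result for $A_{\vec p,\vec r}^{\mathcal R}$ is in place, and supplying that structure is the missing --- and main --- content.
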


As it happens we are able to prove a more general theorem. In fact, a limited range version to reach the endpoint case is possible, and as a particular case, it gives us Theorem \ref{thm:main}. One of the interests of having a  limited range version of the previous theorem is associated with the fact that depending on the singularity of the operators, a higher local integrability of the intervening weights is necessary for their boundedness. See, for example, \cite{H-M}, \cite{C-D-O} or \cite{BM} where that fact is well reflected in the multilinear case.

To state this theorem, we need to introduce some notation. We start by defining the general restricted weights class $A_{\vec p, \vec r}^{\mathcal R}$. We say $\vec w\in A_{\vec p, \vec r}^{\mathcal R}$, if 
\begin{align*}
[\vec w]_{A_{\vec p, \vec r}}^{\mathcal R}:= \sup_Q \Big(\frac 1{|Q|}\int_Q w^{\frac{\delta_{m+1}}{p}}\Big)^{\frac 1{\delta_{m+1}}}\prod_{i=1}^m \| \chi_Q w_i^{-\frac 1{p_i}}\|_{L^{\delta_i, \infty}(\frac{\ud x}{|Q|})}<\infty,
\end{align*}
where $w=\prod\limits_{i=1}^m w_i^{p/{p_i}}$, $1\le r_1,\ldots, r_{m+1}<\infty$, $r_i\le p_i$ for all $1\le i\le m$, $p< r_{m+1}'$ and 
\begin{align*}
\frac 1p=\sum_{i=1}^m\frac 1{p_i},\quad \frac 1{\delta_i}:= \frac 1{r_i}-\frac 1{p_i}\,\,\forall \,1\le i\le m+1,\quad \frac 1{p_{m+1}}=1-\frac 1p.
\end{align*}
Here if $r_i=p_i$ we have the expression $$\| \chi_Q w_i^{-\frac 1{p_i}}\|_{L^{\delta_i, \infty}(\frac{\ud x}{|Q|})}=\esssup_{Q} w_i^{-\frac 1{p_i}}.$$ 
Note that our formulation is different from \eqref{eq:defr=1} when $r_i =1$ for every $i=1,\dots,m+1$. However, thanks to  Lemma \ref{red} below we see that these two formulations coincide (see Remark \ref{rem:dfnequiv} for details). 

Now with the previous notation we can state the main result of this paper. 

\begin{theorem}\label{thm:main-general}
Given measurable functions $f_1, \cdots, f_m$ and $g$ and some exponents $1\le r_i\le p_i<\infty$,  $\forall \,\, 1\le i \le m$, with $1/p=\sum_i 1/{p_i}>  1/{r_{m+1}'}$ for some $1\le r_{m+1}<\infty$. Suppose that for some constants $\alpha_i\in (0,p_i]$ and for all $\vec w=(w_1, \cdots, w_m)\in A_{\vec p, \vec r}^{\mathcal R}$ we have 
\begin{equation}\label{eq:ApR}
\|g\|_{L^{p,\infty}(\prod_{i=1}^m w_i^{p/{p_i}})}\lesssim \prod_{i=1}^m \|f_i\|_{L^{p_i,\alpha_i}(w_i)}.
\end{equation}
Then for all $(v_1, \cdots, v_m)\in A_{(r_1, \cdots, r_m), \vec r}$, 
\begin{equation}\label{eq:conclu}
\|g\|_{L^{\widetilde r,\infty}(\prod_{i=1}^m v_i^{\widetilde r/{r_i}})}\lesssim \prod_{i=1}^m \|f_i\|_{L^{r_i,\alpha_i r_i/{p_i}}(v_i)},
\end{equation}
where $\widetilde r$ is defined via $1/{\widetilde r}=\sum_{i=1}^m 1/{r_i}$. 
\end{theorem}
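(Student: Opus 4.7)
The first move is the standard reduction to characteristic functions. Since $\|\chi_E\|_{L^{s,q}(\mu)}\sim \mu(E)^{1/s}$ for every $s\in (0,\infty)$ and every $q\in(0,\infty]$, testing the hypothesis and the conclusion on $f_i=\chi_{E_i}$ converts each of them into an estimate of the form
\[
\lambda\,\mu(\{|g|>\lambda\})^{1/s}\lesssim \prod_{i=1}^m \mu_i(E_i)^{1/s_i},
\]
where $(s,s_i,\mu,\mu_i)$ is either $(p,p_i,w,w_i)$ with $\vec w\in A_{\vec p,\vec r}^{\mathcal R}$, or $(\widetilde r,r_i,v,v_i)$ with $\vec v\in A_{(r_1,\dots,r_m),\vec r}$. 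The Lorentz second indices $\alpha_i$ drop out entirely, so only the first indices and the weight classes need to be moved.

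Fix now $\vec v\in A_{(r_1,\dots,r_m),\vec r}$, measurable sets $E_1,\dots,E_m$, and $\lambda>0$; put $\Omega=\{|g|>\lambda\}$. The core step is to produce, depending on $\vec v$, $\vec E$ and $\Omega$, a vector $\vec w\in A_{\vec p,\vec r}^{\mathcal R}$ such that
\[
v(\Omega)^{1/\widetilde r}\prod_{i=1}^m v_i(E_i)^{-1/r_i}\;\lesssim\; w(\Omega)^{1/p}\prod_{i=1}^m w_i(E_i)^{-1/p_i},
\]
so that inserting this $\vec w$ into the restricted form of \eqref{eq:ApR} produces exactly \eqref{eq:conclu}. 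The construction of $\vec w$ would be through an off-diagonal Rubio de Francia iteration carried out one coordinate at a time, following the scheme of Duoandikoetxea \cite{duo:duo} in its restricted weak type incarnations \cite{cgs:cgs,cr:cr,ro:ro}. The passage from a single-weight iteration to a weight-vector iteration relies on the linearization idea of Lemma~3.2 of \cite{lmo:lmo}: after pushing forward to the weighted measure $v\,\ud x$, the multilinear $A_{\vec p,\vec r}^{\mathcal R}$ condition decouples into genuinely one-variable restricted $A_{\bullet}$ conditions, at which point the single-variable extrapolation (in the spirit of Theorem~8.21 of \cite{ro:ro}) moves the outer index from $p_j$ to $r_j$. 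The assumption $1/p>1/r_{m+1}'$ is exactly the positivity needed so that the dual endpoint iteration at the outer index has a bounded underlying maximal operator, and hence a convergent Rubio de Francia series.

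The principal obstacle I anticipate is not the inductive step itself but the construction of the one-variable iteration operator in the restricted weak type regime. Because $A^{\mathcal R}$ classes are strictly larger than the classical $A_p$ classes, iterating the Hardy--Littlewood maximal function gives the wrong weight; one must instead seed the Rubio de Francia algorithm with a maximal-type operator naturally paired with the $L^{\delta_i,\infty}$ essential-supremum-type averages in the definition of $A_{\vec p,\vec r}^{\mathcal R}$, and prove that it is bounded on the appropriate weak Lorentz space so that the algorithm terminates in a weight belonging to the right class with quantitative control of its characteristic constant. Lemma~\ref{red}, together with Remark~\ref{rem:dfnequiv}, is the device I would use to match the output of the algorithm with the two equivalent forms of the $A_{\vec p,\vec r}^{\mathcal R}$ condition and thereby close the loop.
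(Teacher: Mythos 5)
There is a genuine gap, and it starts at your very first move. The hypothesis \eqref{eq:ApR} is an abstract statement about one fixed tuple of measurable functions $(f_1,\dots,f_m,g)$ holding for all $\vec w\in A_{\vec p,\vec r}^{\mathcal R}$; there is no operator here, so you cannot ``test the hypothesis on $f_i=\chi_{E_i}$'' --- the $f_i$ are given and the extrapolation has to be carried out for those very functions. Moreover, the claim that ``the Lorentz second indices $\alpha_i$ drop out entirely'' is not a harmless simplification but the loss of the essential content of the theorem: the conclusion \eqref{eq:conclu} carries the index $\alpha_i r_i/p_i$, and Remark \ref{rema} shows this cannot be upgraded to $L^{r_i}$ in general, so any argument that only sees characteristic functions must still explain how to recover general $f_i$ in the quasi-Banach range $\widetilde r\le 1$, where $L^{\widetilde r,\infty}$ is not normable and the naive dyadic summation fails (this is exactly the issue the paper has to work around in Section 4, where normability of $L^{p/\alpha,\infty}$ with $p/\alpha>1$ is used). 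In the actual proof the indices $\alpha_i$ are carried through the layer-cake computation in the off-diagonal step and are transformed, $\alpha\mapsto \alpha r_0/p_0$; they never disappear.

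The second problem is that your ``core step'' --- constructing, from $(\vec v,\vec E,\Omega)$, a weight $\vec w\in A_{\vec p,\vec r}^{\mathcal R}$ with the comparison of the two weighted quantities --- is left entirely as a black box, and the decoupling you propose to justify it is precisely what the paper says is unavailable. The linearization of Lemma 3.2 of \cite{lmo:lmo} relies on each $\sigma_i=w_i^{1-p_i'}$ being in $A_\infty$, which fails for the restricted classes; the paper only obtains the partial structural result of Proposition \ref{impli-g} (a one-variable decoupling with respect to the measure $\mu$ built from the remaining components), and even that requires the Sawyer-type inequality of Lemma \ref{lem:saw}. Likewise, the one-variable step in the paper is not a Rubio de Francia algorithm at all: Theorem \ref{thm:offdia-g} is proved by a direct Duoandikoetxea-type splitting of $\{|g|>y\}$ via the level sets of $M_\mu H$ with $H=|f|^{r_0}w^{1-\delta_1/r_0}\mu^{-1}$, using Lemma \ref{lem:saw} on the ``bad'' set and inserting the explicitly constructed weight $w^{\delta_1/\delta_0}(M_\mu H)^{-1/(p_0/r_0)'}\in\widehat A_{p_0/r_0,\delta_0/r_0}(\mu)$ on the ``good'' set. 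Your alternative plan --- seeding a Rubio de Francia algorithm with a maximal operator adapted to the $L^{\delta_i,\infty}$ averages and proving its boundedness on the relevant weak Lorentz space --- is exactly the obstacle you yourself flag and do not resolve, so the proposal does not contain a proof of the theorem.
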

\begin{remark}
Observe  that if $\alpha_i=p_i$ for all $1\le i\le m$, then we genuinely end up with   a weak type estimate. 
\end{remark}

{\it{An idea of the proof of Theorem  \ref{thm:main-general} :}} As the multi-variable extrapolation presented in \cite{lmo:lmo} that follows the scheme of \cite{duo:duo}, the proof of this theorem is based on two main ingredients: an off-diagonal theorem in the restricted context and a structural result concerning the $A_{\vec{p}, \vec r}^{\mathcal R}$ classes. In fact, we believe that, the result that deals with the structure of the $A_{\vec{p}, \vec r}^{\mathcal{R}}$  (see Proposition \ref{impli-g} in the next section) represents one of the novel points of this paper even in the particular case of $r_i=1$ for $i=1,\dots,m+1$. It shows the connection between the multi-components of the vector weight in $A_{\vec{p}, \vec r}^{\mathcal R}$ classes. 
An analogous characterization for the $A_{\vec{p},\vec{r}}$ classes can be found in \cite[Lemma 3.2]{lmo:lmo}. In that case or in the equivalent presented in \cite[Theorem 3.6]{loptt:loptt}  (for $A_{\vec{p}}$) complete characterizations are achievable due to (among other things)  each weight $\sigma_i=w_i^{1-p_{i}'}\in A_{\infty}$. However, in our context, the $A_{\vec{p}}^{\mathcal R}$ classes generally do not exhibit such good property in the weights $\sigma_i$.  To overcome this obstacle and obtain a useful variant, a deeper understanding of weak norms is necessary. This understanding is partially facilitated by a Sawyer-type inequality, which can be found in, for example, \cite{CU-M-P0}.

The paper is organized as follows. In the next section we provide an off-diagonal extrapolation theorem. In Section 3 we obtain a partial characterization of the classes $A_{\vec{p}, \vec r}^{\mathcal{R}}$ and we give the proof of Theorem \ref{thm:main-general}. Finally, in the last section we present several applications.

	\section{A general off-diagonal extrapolation theorem}
This section is devoted to proving a general off-diagonal extrapolation theorem.  First, we need to introduce some notation. 

	Let $\mu$  be a doubling measure. For $1\le r <\infty$ we will say that $v\in \widehat{A}_r(\mu)$ if and only if there exist $u_1\in A_{1}(\mu)$ and a function $g$ with $M_{\mu}g(x)<\infty$ a.e $x$ such that $v:=u_1 M_{\mu}g^{1-r}$. Observe that the class $\widehat{A}_1(\mu)$ agrees with $A_{1}(\mu)$.
Moreover, if  $0<q<\infty$ and $1\le r <\infty$ we will say that  $v\in \widehat{A}_{r,q}(\mu)$ if and only if $v^q\in \widehat{A}_{1+\frac{q}{r'}}(\mu)$. 

We will need the following result.

\begin{lemma}[\cite{CU-M-P0}] \label{lem:saw} Let $\mu$ be a weight satisfying $A_{\infty}$ condition. If $u \in A_1(\mu)$ and $u v\in A_{\infty}(\mu)$ then 
	$$\left\|\frac{M_{\mu}f}{v}\right\|_{L^{1,\infty}(u v \mu)} \lesssim \left\| f \right\|_{L^{1}(u \mu)}.$$ 
\end{lemma}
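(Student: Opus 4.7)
This is the classical Sawyer-type two-weight weak $(1,1)$ inequality for the Hardy--Littlewood maximal function, stated here in the $\mu$-doubling context. The plan is to follow Sawyer's original strategy, combining a Calder\'on--Zygmund/Whitney decomposition adapted to the pointwise threshold $\lambda v$ with the structural properties $u \in A_1(\mu)$ and $uv \in A_\infty(\mu)$. After the standard reduction to $f \geq 0$, fix $\lambda > 0$, set $\sigma := uv$ and $E_\lambda := \{M_\mu f > \lambda v\}$; the target is $\sigma\mu(E_\lambda) \leq C\lambda^{-1} \int f u \, d\mu$.

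The core step is to produce, via a stopping-time selection, a family of pairwise disjoint cubes $\{Q_j\}$ with the properties: (i) $E_\lambda \subset \bigcup_j c Q_j$ up to a $\mu$-null set for some fixed dilation factor $c$; (ii) each $Q_j$ carries an associated value $\bar v_j$ satisfying $\frac{1}{\mu(Q_j)}\int_{Q_j} f \, d\mu \gtrsim \lambda \bar v_j$; and (iii) the averaged control $\int_{Q_j} uv \, d\mu \lesssim \bar v_j \int_{Q_j} u \, d\mu$ holds. Granted this decomposition, invoking $A_1(\mu)$ for $u$ in the form $\frac{1}{\mu(Q_j)} \int_{Q_j} u \, d\mu \lesssim \essinf_{Q_j} u$ yields
$$\sigma\mu(Q_j) \lesssim \bar v_j \, u\mu(Q_j) \lesssim \bar v_j \mu(Q_j) \essinf_{Q_j} u \lesssim \frac{\essinf_{Q_j} u}{\lambda} \int_{Q_j} f \, d\mu \leq \frac{1}{\lambda} \int_{Q_j} f u \, d\mu,$$
and summing over the disjoint $Q_j$, using the doubling of $\sigma\mu$ (which holds because $\sigma \in A_\infty(\mu)$) to absorb the dilation $c$,
$$\sigma\mu(E_\lambda) \leq \sum_j \sigma\mu(c Q_j) \lesssim \sum_j \sigma\mu(Q_j) \lesssim \frac{1}{\lambda} \int f u \, d\mu.$$

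The principal obstacle is property (iii) of the decomposition: we do not assume $v \in A_\infty(\mu)$, only $uv \in A_\infty(\mu)$, so a naive stopping argument on $v$ does not yield local control. The remedy is to run the selection against the doubling measure $\sigma\mu$ rather than $\mu$, taking $\bar v_j$ as an appropriately averaged value of $v$ against $u\, d\mu$. The reverse H\"older / Fujii--Wilson characterization of $\sigma \in A_\infty(\mu)$ then supplies (iii); alternatively, a Coifman--Rochberg factorization $\sigma = w_0 (M_\mu g)^{\epsilon}$ with $w_0 \in A_1(\mu)$ can be used to decouple $v$ from $u$ on each selected cube.
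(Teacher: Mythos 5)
The paper does not actually prove Lemma~\ref{lem:saw}; immediately after the statement it simply says that the result appeared in \cite{CU-M-P0} for $\mu=1$ and that the proof there carries over once one notes $\mu\in A_\infty$ is doubling. So there is no in-paper argument to match, only the reference, and your attempt has to stand on its own.

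On its own terms, your sketch has the right skeleton (disjoint cubes, $A_1(\mu)$ for $u$, doubling of $\sigma\mu$ from $\sigma=uv\in A_\infty(\mu)$, and the displayed chain of inequalities is arithmetically correct), but it leaves the actual theorem unproven: the simultaneous achievement of (ii) and (iii) \emph{is} the content of the Cruz-Uribe--Martell--P\'erez result, and your ``remedy'' does not deliver it. If you take $\bar v_j:=\int_{Q_j}vu\,d\mu\,/\int_{Q_j}u\,d\mu$, then (iii) is a tautology requiring no $A_\infty$ at all, but (ii) fails: for the witness point $x_j\in Q_j\cap E_\lambda$ you only know $\tfrac1{\mu(Q_j)}\int_{Q_j}f\,d\mu>\lambda v(x_j)$, which is \emph{weaker} than (ii) whenever $v(x_j)<\bar v_j$, and nothing forces points of $E_\lambda$ to live where $v$ exceeds its local average. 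Conversely, taking $\bar v_j:=v(x_j)$ makes (ii) automatic but turns (iii) into a pointwise-to-average control on $v$ that $uv\in A_\infty(\mu)$ alone does not give (this is precisely the role $v\in A_1$ played in Sawyer's original proof, which CUMP removes). Citing Fujii--Wilson or Coifman--Rochberg as a black box does not bridge this gap; they characterize $A_\infty$ for $\sigma$ but do not transfer information to $v$ by itself. (Also note: Coifman--Rochberg produces $\sigma= w_0\,(M_\mu g)^{-\epsilon}$ with $w_0\in A_1(\mu)$ and a \emph{negative} power, consistent with the paper's own definition of $\widehat A_r(\mu)$, not $(M_\mu g)^{+\epsilon}$ as you wrote.) The argument in \cite{CU-M-P0} does not proceed cube-by-cube with a single stopping family in the way you outline; if you want to give a proof rather than cite it, you need to reproduce their more delicate scheme, which handles precisely the set of points of $E_\lambda$ where $v$ is small relative to its neighborhood average.
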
  
This result appeared first in \cite{CU-M-P0} in the particular case $\mu=1$, but using the $A_{\infty}$ condition the same proof there also works.

Now, we state the main result in this section. 

\begin{theorem}\label{thm:offdia-g} Let $\mu$ a weight satisfying the $A_{\infty}$ condition, and let $1\le r_0\le p_0<\infty$, $0<q_0< s_0$ and $\alpha\in (0,p_0]$. Assume that for certain measurable functions $f$ and $g$ and for all $w\in \widehat A_{\frac{p_0}{r_0}, \frac{\delta_{0}}{r_0}}(\mu)$, where $\delta_0$ is defined via 
$1/{\delta_0}= 1/{q_0}-1/{s_0}$, there holds 
\begin{equation*}
\|g\|_{L^{q_0,\infty}(w^{\frac {q_0}{r_0}} \mu^{\frac {q_0}{\delta_0}})}\le C \|f\|_{L^{p_0,\alpha}(w^{\frac{p_0}{r_0}}\mu^{1-\frac{p_0}{r_0}})}.
\end{equation*}
Then for $0<q<s_0$ with 
\[
\frac 1q-\frac 1{q_0}= \frac 1{r_0}-\frac 1{p_0}
\]
and all $w\in \widehat A_{1, \frac{\delta_1}{r_0}}(\mu)$, where $1/{\delta_1}= \frac{1}{q}-\frac{1}{s_0}$, we have 
\[
\| g\|_{L^{q,\infty}(w^{\frac q{r_0}} \mu^{\frac q{\delta_1}})}\le C \|f\|_{L^{r_0,\frac{\alpha r_0}{p_0}}(w)}.
\]
\end{theorem}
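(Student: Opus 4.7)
The plan is to adapt the off-diagonal Rubio de Francia extrapolation scheme from \cite{duo:duo} to the restricted weak-type setting built on the doubling/$A_\infty$ measure $\mu$, with Lemma \ref{lem:saw} as the main device for absorbing maximal-function factors against $A_1(\mu)$ weights. First, I would fix $w\in\widehat A_{1,\delta_1/r_0}(\mu)$ and unpack the definition: $w^{\delta_1/r_0}\in\widehat A_{1+\delta_1/r_0}(\mu)$, so there exist $u_1\in A_1(\mu)$ and a measurable $h$ with $M_\mu h<\infty$ a.e.\ such that $w^{\delta_1/r_0}=u_1(M_\mu h)^{-\delta_1/r_0}$. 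Using the distribution-function description of the weak Lorentz norm, the desired estimate reduces to controlling $\lambda\bigl(w^{q/r_0}\mu^{q/\delta_1}\bigr)(\{|g|>\lambda\})^{1/q}$ uniformly in $\lambda>0$.

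The heart of the argument is to produce from $w$ a new weight $W$ that lies in the hypothesis class $\widehat A_{p_0/r_0,\delta_0/r_0}(\mu)$ while dominating the quantity in which $w$ naturally appears, so that plugging $W$ into the assumed bound, after a controlled dual step, yields an estimate with $w$ on the right. I would construct $W$ by a Rubio de Francia-type iteration of $M_\mu$ applied to a dualized test function coming from the level set $\{|g|>\lambda\}$. The numerical identities
\[
\frac{1}{q}-\frac{1}{q_0}=\frac{1}{r_0}-\frac{1}{p_0},\qquad \frac{1}{\delta_0}-\frac{1}{\delta_1}=\frac{1}{q_0}-\frac{1}{q},
\]
together with the factorization of $w$ above, force $W^{\delta_0/r_0}$ to take the form $u_1'(M_\mu H)^{-\delta_0/r_0}$ for some $u_1'\in A_1(\mu)$, which is precisely the condition for $W\in\widehat A_{p_0/r_0,\delta_0/r_0}(\mu)$. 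Applying the hypothesis to $W$ and translating back through H\"older's inequality for Lorentz norms produces the correct rescaling $\alpha\mapsto \alpha r_0/p_0$ on the second Lorentz index; any leftover $(M_\mu h)^{-\kappa}$ factor is then absorbed via Lemma \ref{lem:saw}, which is exactly where the $A_\infty(\mu)$ hypothesis on $\mu$ and the $A_1(\mu)$ character of $u_1$ are needed.

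The principal obstacle is verifying membership of the Rubio de Francia iterate $W$ in $\widehat A_{p_0/r_0,\delta_0/r_0}(\mu)$: one must line up the $A_1(\mu)$ factor inherited from $w$, the new $M_\mu$-factor produced by the iteration, and the ambient power of $\mu$ so that the resulting exponents match the $\widehat A$-definition. The weak Lorentz duality, which replaces the standard $L^p$ duality used in \cite{duo:duo}, is the second subtle point; the distribution-function reformulation together with the Sawyer-type bound of Lemma \ref{lem:saw} is meant to furnish the substitute for the classical $L^p$ tools.
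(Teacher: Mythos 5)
There is a genuine gap, and it starts already at the unpacking step. By the paper's definition, $w\in\widehat A_{1,\delta_1/r_0}(\mu)$ means $w^{\delta_1/r_0}\in\widehat A_{1+(\delta_1/r_0)/1'}(\mu)=\widehat A_1(\mu)=A_1(\mu)$, i.e.\ $w^{\delta_1/r_0}$ is itself an $A_1(\mu)$ weight with \emph{no} $(M_\mu h)^{-\kappa}$ factor. Your factorization $w^{\delta_1/r_0}=u_1(M_\mu h)^{-\delta_1/r_0}$ would correspond to $w^{\delta_1/r_0}\in\widehat A_{1+\delta_1/r_0}(\mu)$, which is a strictly larger class. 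This matters because the proof uses $w^{\delta_1/r_0}\in A_1(\mu)$ (together with $A_\infty(\mu)$ for a companion weight) directly in the Sawyer-type Lemma~\ref{lem:saw}; carrying an extra fictitious $M_\mu h$ factor derails the exponent bookkeeping from the outset.

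The more serious issue is the strategy itself. You propose to build the auxiliary weight $W$ by a Rubio de Francia iteration applied to a ``dualized test function coming from the level set $\{|g|>\lambda\}$,'' and to close the argument via ``weak Lorentz duality.'' But in the range relevant to this theorem (and to its application in Theorem~\ref{thm:main-general}) the target exponent $q$ can be $\le 1$, where $L^{q,\infty}$ fails to be normable and no usable duality is available --- the paper itself emphasizes this obstruction in Section~\ref{sec:applications}. The actual proof avoids duality entirely: it sets $H=|f|^{r_0}w^{1-\delta_1/r_0}\mu^{-1}$ (built from $f$ and $w$, \emph{not} from $g$), splits $\{|g|>y\}$ into the two level sets $E$ and $F$ determined by whether $w^{(\delta_1/r_0)(q/s_0)}\mu^{q/s_0}M_\mu H$ is larger or smaller than $(\gamma y)^{r_0}$, estimates the $E$-piece by Lemma~\ref{lem:saw} using $w^{\delta_1/r_0}\in A_1(\mu)$, estimates the $F$-piece by checking that $v:=w^{\delta_1/\delta_0}(M_\mu H)^{-1/(p_0/r_0)'}$ lies in $\widehat A_{p_0/r_0,\delta_0/r_0}(\mu)$ and applying the hypothesis (then using $M_\mu H\ge H$ pointwise to return to a Lorentz norm of $f$ in $w$), and finally optimizes over the free parameter $\gamma$. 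No Rubio de Francia iteration and no dual test function appear; the single application of $M_\mu$ to the concrete $H$ is what makes the exponents match, and the $\gamma$-optimization replaces what duality would normally supply. Your outline identifies the right ingredients (Lemma~\ref{lem:saw}, the hypothesis class, the exponent identities) but does not assemble them into a workable argument, and the dual/RdF route it proposes would not survive the sub-normable range $q\le 1$.
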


Before providing a proof of this result, we observe that in the particular case $r_0=1$ and $\alpha=1$ the previous theorem is contained in \cite[Theorem 8.10]{ro:ro}.

\begin{proof}
Let $H=|f|^{r_0}w^{1-\frac{\delta_1}{r_0}} \mu^{-1} $. Then set
\[
E=\Big\{  w^{\frac{\delta_1}{r_0}\cdot \frac{q}{s_0}} \mu^{\frac q{s_0}}M_\mu H >(\gamma y)^{r_0} \Big\},\quad F=\Big\{  w^{\frac{\delta_1}{r_0}\cdot \frac{q}{s_0}} \mu^{\frac q{s_0}} M_\mu H \le (\gamma y)^{r_0}, |g|>y \Big\}. 
\]
Then we have 
\begin{align*}
\int_{\{|g|>y\}}w^{\frac q{r_0}} \mu^{\frac q{\delta_1}}&\le \int_{E}w^{\frac q{r_0}} \mu^{\frac q{\delta_1}}
+\int_{F}w^{\frac q{r_0}} \mu^{\frac q{\delta_1}}.
\end{align*}
For the first term, we have 
\begin{align*}
\int_{E}w^{\frac q{r_0}} \mu^{\frac q{\delta_1}}= \int_E w^{\frac{\delta_1}{r_0}} \big(  w^{\frac{\delta_1}{r_0}\cdot \frac{q}{s_0}} \mu^{\frac q{s_0}}\big)^{-1} \mu. 
\end{align*}
Since $w^{\frac{\delta_1}{r_0}}\in A_1(\mu)$ and 
\[
 w^{\frac{\delta_1}{r_0}} \big(  w^{\frac{\delta_1}{r_0}\cdot \frac{q}{s_0}} \mu^{\frac q{s_0}}\big)^{-1} = 
\big( w^{\frac{\delta_1}{r_0}} \big)^{1- \frac q{s_0}} (\mu^{-1})^{\frac q{s_0}}\in A_\infty(\mu),
\]
by Lemma \ref{lem:saw} and $\alpha\le p_0$ we have 
\begin{align}\label{eq:estimoE}
	\int_{E}w^{\frac q{r_0}} \mu^{\frac q{\delta_1}}&\lesssim (\gamma y)^{-r_0}\int H w^{\frac{\delta_1}{r_0}}  \mu= (\gamma y)^{-r_0} \int |f|^{r_0} w\nonumber\\
	&\lesssim  (\gamma y)^{-r_0} \|f\|_{L^{r_0, \frac{\alpha r_0}{p_0}}(w)}^{r_0}.
\end{align}

It remains to estimate the integral over $F$. Let 
\[
\beta= \frac{q_0/{r_0}}{({p_0}/{r_0})'}.
\]
Then it is easy to check that $1+\beta= q_0/q$ and therefore
\[
1- \frac q{s_0}(1+\beta)= \frac{q_0}{\delta_0}.
\]Hence we have 
\begin{align*}
\int_{F}w^{\frac q{r_0}} \mu^{\frac q{\delta_1}}&\le (\gamma y)^{r_0\beta} \int_{\{|g|>y\}} \frac{w^{\frac q{r_0}} \mu^{\frac q{\delta_1}}}{ (M_\mu H)^\beta (w^{\frac{\delta_1}{r_0}\cdot \frac{q}{s_0}} \mu^{\frac q{s_0}}  )^\beta}\\
&=  (\gamma y)^{r_0\beta}  \int_{\{|g|>y\}}  w^{\frac{\delta_1}{r_0}(1-\frac q{s_0}(1+\beta))} \mu^{1-\frac q{s_0}(1+\beta) } (M_\mu H)^{-\beta}\\
&=  (\gamma y)^{r_0\beta} \int_{\{|g|>y\}}  \big[w^{\frac{\delta_1}{\delta_0}}  (M_\mu H)^{-\frac 1{(p_0/{r_0})'}} \big]^{\frac {q_0}{r_0}}\mu^{\frac{q_0}{\delta_0} }.
\end{align*}
To proceed we need 
\[
v:= w^{\frac{\delta_1}{\delta_0}}  (M_\mu H)^{-\frac 1{(p_0/{r_0})'}} \in \widehat A_{p_0/{r_0}, \delta_{0}/{r_0}}(\mu).
\]
This is indeed the case since 
\begin{align*}
\big[ w^{\frac{\delta_1}{\delta_0}}  (M_\mu H)^{-\frac 1{(p_0/{r_0})'}}\big]^{\delta_{0}/{r_0} }= w^{\delta_1/{r_0}}(M_\mu H)^{-\frac {\delta_{0}/{r_0} }{(p_0/{r_0})'}}\in \widehat A_{1+ \frac {\delta_{0}/{r_0} }{(p_0/{r_0})'}}(\mu).
\end{align*}
Thus by the assumption and $M_\mu H\ge H$ we have 
\begin{align}\label{eq:estimoF}
\int_{F}&w^{\frac q{r_0}} \mu^{\frac q{\delta_1}}\nonumber\\&\le (\gamma y)^{r_0\beta} \int_{\{|g|>y\}} v^{\frac {q_0}{r_0}}\mu^{\frac{q_0}{\delta_0} }\nonumber\\
&\lesssim  \gamma^{r_0\beta} y^{r_0\beta-q_0}\|f\|_{L^{p_0,\alpha}(v^{\frac{p_0}{r_0}}\mu^{1-\frac{p_0}{r_0}})}^{q_0}\nonumber\\
&\sim  \gamma^{r_0\beta} y^{r_0\beta-q_0} \Big(\int_0^\infty s^{\alpha-1} \Big(\int_{\{|f|>s\}} w^{\frac{\delta_1}{\delta_0}\cdot \frac{p_0}{r_0}} (M_\mu H)^{1-\frac{p_0}{r_0}}\mu^{1-\frac{p_0}{r_0}}\Big)^{\frac \alpha {p_0}}\ud s\Big)^{\frac{q_0}\alpha}\nonumber\\
&\le \gamma^{r_0\beta} y^{r_0\beta-q_0} \Big(\int_0^\infty s^{\alpha-1} \Big(\int_{\{|f|>s\}} w^{\frac{\delta_1}{\delta_0}\cdot \frac{p_0}{r_0}} H^{1-\frac{p_0}{r_0}}\mu^{1-\frac{p_0}{r_0}}\Big)^{\frac \alpha {p_0}}\ud s\Big)^{\frac{q_0}\alpha}\nonumber\\
&= \gamma^{r_0\beta} y^{r_0\beta-q_0} \Big(\int_0^\infty s^{\alpha-1} \Big(\int_{\{|f|>s\}} w  |f|^{r_0-p_0}\Big)^{\frac \alpha {p_0}}\ud s\Big)^{\frac{q_0}\alpha}\nonumber\\
&\le  \gamma^{r_0\beta} y^{r_0\beta-q_0} \Big(\int_0^\infty s^{\frac{\alpha r_0}{p_0}-1} \Big(\int_{\{|f|>s\}} w  \Big)^{\frac \alpha {p_0}}\ud s\Big)^{\frac{q_0}\alpha}\nonumber\\
&\sim  \gamma^{r_0\beta} y^{r_0\beta-q_0} \|f\|_{L^{r_0, \frac{\alpha r_0}{p_0}}(w)}^{\frac{q_0r_0}{p_0}}.
\end{align}
Combining estimates \eqref{eq:estimoE} and \eqref{eq:estimoF} we have 
\begin{align*}
\int_{\{|g|>y\}}w^{\frac q{r_0}} \mu^{\frac q{\delta_1}}\lesssim  (\gamma y)^{-r_0}\|f\|_{L^{r_0, \frac{\alpha r_0}{p_0}}(w)}^{r_0}+ \gamma^{r_0\beta} y^{r_0\beta-q_0} \|f\|_{L^{r_0, \frac{\alpha r_0}{p_0}}(w)}^{\frac{q_0r_0}{p_0}}.
\end{align*}
Optimizing the choice of $\gamma$ we get 
\[
\int_{\{|g|>y\}}w^{\frac q{r_0}} \mu^{\frac q{\delta_1}}\lesssim y^{-q} \|f\|_{L^{r_0, \frac{\alpha r_0}{p_0}}(w)}^q
\]
and the result follows. 
\end{proof}
	\section{The classes $A_{\vec{p}, \vec r}^{\mathcal{R}}$ and proof of Theorem \ref{thm:main-general}}\label{sec:auxiliar} 
We recall the definition of the class $A_{\vec p, \vec r}^{\mathcal R}$. We say $\vec w\in A_{\vec p, \vec r}^{\mathcal R}$, if 
\begin{align*}
[\vec w]_{A_{\vec p, \vec r}}^{\mathcal R}:= \sup_Q \Big(\frac 1{|Q|}\int_Q w^{\frac{\delta_{m+1}}{p}}\Big)^{\frac 1{\delta_{m+1}}}\prod_{i=1}^m \| \chi_Q w_i^{-\frac 1{p_i}}\|_{L^{\delta_i, \infty}(\frac{\ud x}{|Q|})}<\infty,
\end{align*}
where $w=\prod\limits_{i=1}^m w_i^{p/{p_i}}$, $1\le r_1,\ldots, r_{m+1}<\infty$, $r_i\le p_i$ for all $1\le i\le m$, $p< r_{m+1}'$ and 
\begin{align*}
\frac 1p=\sum_{i=1}^m\frac 1{p_i},\quad \frac 1{\delta_i}:= \frac 1{r_i}-\frac 1{p_i}\,\,\forall \,1\le i\le m+1,\quad \frac 1{p_{m+1}}=1-\frac 1p.
\end{align*}
For notational convenience, we also set $\frac{1}{r}=\sum\limits_{i=1}^{m+1}\frac{1}{r_i}$ and the following expressions
\[
\frac1\varrho:= \frac1{r_m}-\frac 1{r_{m+1}'}+\sum_{i=1}^{m-1}\frac 1{p_i},\qquad \mu:= \Big( \prod_{i=1}^{m-1} w_i^{\frac 1{p_i}}\Big)^{\varrho}.
\]

We now supply the sufficient understanding of the classes $ A_{\vec p, \vec r}^{\mathcal R}$ to prove Theorem \ref{thm:main-general}.

	\begin{proposition} \label{impli-g}
		\begin{enumerate}[\rm(a)] 
		\item Let $1\le r_i\le p_i<\infty$ for $1\le i \le m-1$ and $p_m=r_m$ and $\frac{1}{p}=\sum_i \frac{1}{p_i} > \frac{1}{r_{m+1}'}$. Then 
$\vec w\in A_{\vec p, \vec r}^{\mathcal R}$ if and only if $(w_1, \cdots, w_{m-1}, 1)\in A_{\vec p, \vec r}^{\mathcal R}$ and $w_m^{\varrho/{r_m}} \in A_1(\mu)$.
                \item  Let $1\le r_i\le p_i<\infty$ for $1\le i \le m$ and $\frac{1}{p}=\sum_i \frac{1}{p_i}> \frac{1}{r_{m+1}'}$.  Assume that  $(w_1, \cdots, w_{m-1}, 1)\in A_{(p_1,\cdots, p_{m-1},r_m), \vec r}^{\mathcal R}$ and $W^{\delta_{m+1}/{r_{m}}} \in \widehat A_{1+ \frac{\delta_{m+1}}{\delta_m}}(\mu)$. If we set $w_m= W^{\frac{p_m}{r_m}}\mu^{-\frac{p_m}{\delta_m}} $, then $(w_1, \cdots, w_m)\in A_{\vec p, \vec r}^{\mathcal R}$. 
	\end{enumerate}
	\end{proposition}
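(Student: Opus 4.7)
The strategy in both parts is to decouple the defining inequality of $A^{\mathcal R}_{\vec p,\vec r}$ into a $(w_1,\ldots,w_{m-1})$-piece and a $w_m$-piece measured against the auxiliary measure $d\mu=\prod_{i=1}^{m-1} w_i^{\varrho/p_i}\,\ud x$. For part (a), the hypothesis $p_m=r_m$ forces $\delta_m=\infty$, $\delta_{m+1}=\varrho$, and $w^{\varrho/p}=w_m^{\varrho/r_m}\mu$, so the $A^{\mathcal R}$-condition rewrites as
\[
(\ast)\quad \Big(\tfrac{1}{|Q|}\int_Q w_m^{\varrho/r_m}\mu\Big)^{1/\varrho}\prod_{i=1}^{m-1}\|\chi_Q w_i^{-1/p_i}\|_{L^{\delta_i,\infty}(\frac{\ud x}{|Q|})}\esssup_Q w_m^{-1/r_m}\le C.
\]
The backward implication in (a) is then a direct multiplication of the two hypothesized bounds. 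For the forward direction, the conclusion $(w_1,\ldots,w_{m-1},1)\in A^{\mathcal R}_{\vec p,\vec r}$ follows immediately from $(\ast)$ via the estimate $\int_Q w_m^{\varrho/r_m}\mu\ge\mu(Q)\essinf_Q w_m^{\varrho/r_m}$ combined with the identity $\essinf_Q w_m^{1/r_m}\cdot\esssup_Q w_m^{-1/r_m}=1$.

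The main obstacle in (a) is the remaining implication $(\ast)\Rightarrow w_m^{\varrho/r_m}\in A_1(\mu)$, because $(\ast)$ provides only an upper bound on $\prod_i\|\chi_Q w_i^{-1/p_i}\|_{L^{\delta_i,\infty}}$, whereas the $A_1(\mu)$ conclusion requires the matching \emph{lower} bound
\[
\prod_{i=1}^{m-1}\|\chi_Q w_i^{-1/p_i}\|_{L^{\delta_i,\infty}(\frac{\ud x}{|Q|})}\;\gtrsim\;\Big(\tfrac{|Q|}{\mu(Q)}\Big)^{1/\varrho}.
\]
I plan to derive this reverse estimate in two steps: first, by applying the multilinear H\"older inequality in weak Lorentz spaces with $\sum_{i=1}^{m-1}1/\delta_i=1/q$ and the identity $\prod_i w_i^{-1/p_i}=\mu^{-1/\varrho}$, which gives $\|\chi_Q\mu^{-1/\varrho}\|_{L^{q,\infty}(\frac{\ud x}{|Q|})}\lesssim\prod_i\|\chi_Q w_i^{-1/p_i}\|_{L^{\delta_i,\infty}(\frac{\ud x}{|Q|})}$; second, by a Chebyshev estimate at level $t=2\mu(Q)/|Q|$ giving $\|\chi_Q\mu^{-1/\varrho}\|_{L^{q,\infty}(\frac{\ud x}{|Q|})}\gtrsim(|Q|/\mu(Q))^{1/\varrho}$. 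Plugging the resulting lower bound into $(\ast)$ and using $\esssup_Q w_m^{-1/r_m}=(\essinf_Q w_m^{\varrho/r_m})^{-1/\varrho}$ yields the $A_1(\mu)$-estimate for $w_m^{\varrho/r_m}$.

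Part (b) is structurally similar. The choice $w_m=W^{p_m/r_m}\mu^{-p_m/\delta_m}$ together with the arithmetic identity $1/\varrho=1/\delta_{m+1}+1/\delta_m$ yields $w^{\delta_{m+1}/p}=W^{\delta_{m+1}/r_m}\mu$ and $w_m^{-1/p_m}=W^{-1/r_m}\mu^{1/\delta_m}$. The assumed $A^{\mathcal R}$-bound on $(w_1,\ldots,w_{m-1},1)$ furnishes $\prod_{i=1}^{m-1}\|\chi_Q w_i^{-1/p_i}\|_{L^{\delta_i,\infty}(\frac{\ud x}{|Q|})}\lesssim(|Q|/\mu(Q))^{1/\varrho}$, so verifying $\vec w\in A^{\mathcal R}_{\vec p,\vec r}$ reduces to proving
\[
\Big(\tfrac{1}{|Q|}\int_Q W^{\delta_{m+1}/r_m}\mu\Big)^{1/\delta_{m+1}}\|\chi_Q W^{-1/r_m}\mu^{1/\delta_m}\|_{L^{\delta_m,\infty}(\frac{\ud x}{|Q|})}\;\lesssim\;\Big(\tfrac{\mu(Q)}{|Q|}\Big)^{1/\varrho}.
\]
Using the $\widehat A_{1+\delta_{m+1}/\delta_m}(\mu)$-decomposition $W^{\delta_{m+1}/r_m}=u_1(M_\mu g)^{-\delta_{m+1}/\delta_m}$ with $u_1\in A_1(\mu)$, the first factor is controlled via $M_\mu g\ge\frac{1}{\mu(Q)}\int_Q g\,d\mu$ on $Q$ together with the $A_1(\mu)$-average bound for $u_1$. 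The second factor, after the rescaling $\|f^{1/\delta}\|_{L^{\delta,\infty}}^{\delta}=\|f\|_{L^{1,\infty}}$, becomes a weak-$L^1(\ud x)$ norm of $u_1^{-\delta_m/\delta_{m+1}}M_\mu g\,\mu$, and the hardest step is to bound this quantity by a matching power of $\mu(Q)/|Q|$. For this I plan to invoke the Sawyer-type Lemma \ref{lem:saw} with a careful choice of the auxiliary weights and to localize $M_\mu g$ on $2Q$ (so that the tail part is essentially constant on $Q$ and the local part is controlled by an $L^1(\mu)$-norm on $2Q$), combining the two factors to produce the required $(\mu(Q)/|Q|)^{1/\varrho}$ bound.
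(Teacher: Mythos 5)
Your part (a) is correct and follows the paper's route: the rewriting of the $A_{\vec p,\vec r}^{\mathcal R}$ condition when $p_m=r_m$ (with $\delta_m=\infty$, $\delta_{m+1}=\varrho$, $w^{\varrho/p}=w_m^{\varrho/r_m}\mu$), the backward implication by multiplying the two bounds, and the forward implication by splitting the product are exactly the paper's computation; your weak-Lorentz H\"older plus Chebyshev argument correctly supplies the lower bound $\prod_{i=1}^{m-1}\|\chi_Q w_i^{-1/p_i}\|_{L^{\delta_i,\infty}(\frac{\ud x}{|Q|})}\gtrsim(|Q|/\mu(Q))^{1/\varrho}$ which the paper only asserts, so this part is complete.

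In part (b), however, the step you yourself single out as the hardest one is left as a plan, and the specific plan would not close. First, bounding the first factor through $M_\mu g\ge \frac1{\mu(Q)}\int_Q |g|\ud\mu$ on $Q$ cannot be matched against a second factor localized on $2Q$: after Lemma \ref{lem:saw} the local part is controlled by $\frac1{|Q|}\int_{2Q}|g|\ud\mu$ and the tail part by $\frac{\mu(Q)}{|Q|}\essinf_Q M_\mu(g\chi_{(2Q)^c})$, and neither quantity is dominated by $\frac1{|Q|}\int_{Q}|g|\ud\mu$ (if $g$ vanishes on $Q$ but not near it, your first-factor bound degenerates while the second factor stays positive, so no cancellation is available). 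The cancellation requires estimating the first factor with $\essinf_Q M_\mu g$, which dominates both the average of $|g|\,\ud\mu$ over the enlarged cube and the tail constant; this is precisely the choice made in the paper. Second, keeping $u_1^{-\delta_m/\delta_{m+1}}$ inside the weak-$L^1(\ud x)$ norm and absorbing it into the weights of Lemma \ref{lem:saw} forces $u=u_1^{-\delta_m/\delta_{m+1}}$ (so that $uv\mu=\ud x$ with $v=u_1^{\delta_m/\delta_{m+1}}\mu^{-1}$), and a negative power of an $A_1(\mu)$ weight is not in $A_1(\mu)$ in general; if instead you keep $u_1$ in the datum, the lemma returns $\int_{2Q}|g|\,u_1^{-\delta_m/\delta_{m+1}}\ud\mu$, which has no cancellation against the first factor, where $u_1$ appears with a positive power. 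The working route (the paper's) is to first pull out $\esssup_Q u_1^{-1/\delta_{m+1}}=(\essinf_Q u_1^{1/\delta_{m+1}})^{-1}$, then apply Lemma \ref{lem:saw} with $u=1$, $v=\mu^{-1}$ to $\chi_Q\,\mu\,M_\mu(g\chi_{3Q})$, and use $\frac1{|Q|}\int_{3Q}|g|\ud\mu\lesssim\frac{\mu(Q)}{|Q|}\essinf_Q M_\mu g$; for this and for your "tail essentially constant" step you also need to justify that $\mu$ is doubling (Lemma \ref{lem:doubmu}), and you should record the reduction to $p_m>r_m$, since for $p_m=r_m$ one has $\delta_m=\infty$ and (b) collapses to (a). With these corrections your argument becomes the paper's proof; as written, the concluding estimate is not established.
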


Assuming Proposition \ref{impli-g} we can give the proof of Theorem \ref{thm:main-general}.

\begin{proof}[Proof Theorem \ref{thm:main-general}] 
If necessary, we may reindex  the subscript so that we always have $p_m>r_m$ (if $p_i=r_i$ for all $1\le i\le m$ then there is nothing to prove). Thus by part (b) of Proposition \ref{impli-g} and the assumption we know that for every fixed 
$(w_1, \cdots, w_{m-1}, 1)\in A_{(p_1,\cdots, p_{m-1},r_m), \vec r}^{\mathcal R}$
and all $W\in \widehat A_{\frac{p_m}{r_m}, \frac{\delta_{m+1}}{{r_m}}}(\mu)$ (i.e. $W^{\frac{\delta_{m+1}}{r_m}} \in \widehat A_{1+\frac {\delta_{m+1}}{\delta_m}}(\mu)$),
\[
\|g\|_{L^{p,\infty}(W^{\frac p{r_m}} \mu^{\frac p{\delta_{m+1}}})}\lesssim \big(\prod_{i=1}^{m-1}\|f_i\|_{L^{p_i,\alpha_i}(w_i)} \big)\| f_m\|_{L^{p_m,\alpha_m}(W^{\frac{p_m}{r_m}}\mu^{1-\frac{p_m}{r_m}})}.
\]
Then applying Theorem \ref{thm:offdia-g} with 
\[
r_0=r_m,\quad p_0=p_m,\quad \delta_0=\delta_{m+1},\quad q_0=p,\quad s_0=r_{m+1}',\quad \delta_1=\varrho
\] for all $V\in \widehat A_{1, \varrho/r_m}(\mu)$ (in other words $V^{\varrho/r_m}\in A_1(\mu)$), we have 
\begin{equation}\label{eq:qm11}
\|g\|_{L^{q,\infty}(V^{\frac q{r_m}} \mu^{\frac q{\varrho}})}\lesssim \big(\prod_{i=1}^{m-1}\|f_i\|_{L^{p_i,\alpha_i}(w_i)} \big)\| f_m\|_{L^{r_m,\frac{\alpha_m r_m}{p_m}}(V)},
\end{equation}
where $q$ is defined via $1/q=1/{r_m}+\sum_{i=1}^{m-1}1/{p_i}$. 
Now, by part (a) of Proposition \ref{impli-g} we have that \eqref{eq:qm11} holds for all $$(w_1, \cdots, w_{m-1}, V)\in A_{(p_1, \cdots, p_{m-1}, r_m), \vec r}^{\mathcal R}.$$ Repeating the above process will send us to the desired estimate. 
\end{proof}

The rest of this section will be dedicated to proving Proposition \ref{impli-g}. 
 With the previous notation, first we show the regularity that the class $A_{\vec p, \vec r}^{\mathcal R}$ provides to the weight $w$.

\begin{lemma}\label{lem:doubmu}
Suppose that $\vec w\in A_{\vec p, \vec r}^{\mathcal R}$. Then $w^{\delta_{m+1}/p}\in A_{(\frac1r-1)\delta_{m+1}}^{\mathcal R}$. In particular, if $(w_1, \cdots, w_{m-1}, 1)\in A_{(p_1,\cdots, p_{m-1},r_m), \vec r}^{\mathcal R}$, then 
$\mu\in A_{(\frac1r-1)\varrho}^{\mathcal R}\subset A_\infty$. 
\end{lemma}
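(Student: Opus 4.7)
To prove the first assertion, I would directly verify the single-weight restricted $A_{q}^{\mathcal R}$ testing condition
\[
\sup_Q \Big(\frac{1}{|Q|}\int_Q W\Big)^{1/q} \|\chi_Q W^{-1/q}\|_{L^{q',\infty}(\ud x/|Q|)} < \infty
\]
with $W:=w^{\delta_{m+1}/p}$ and $q:=(1/r-1)\delta_{m+1}$. The starting point is the algebraic identity
\[
\sum_{i=1}^{m+1}\frac{1}{\delta_i}=\sum_{i=1}^{m+1}\Big(\frac{1}{r_i}-\frac{1}{p_i}\Big)=\frac{1}{r}-1,
\]
obtained from $\sum_{i=1}^{m+1}1/p_i=1$. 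This rearranges to $q-1=\delta_{m+1}\sum_{i=1}^m 1/\delta_i$, and hence to $\frac{1}{q'\cdot\delta_{m+1}/q}=\sum_{i=1}^m \frac{1}{\delta_i}$, which is precisely the exponent identity needed for a generalized weak H\"older inequality with exponents $\delta_1,\ldots,\delta_m$.

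For a fixed cube $Q$, the factorization
\[
\chi_Q W^{-1/q}=\prod_{i=1}^m \bigl(\chi_Q w_i^{-1/p_i}\bigr)^{\delta_{m+1}/q},
\]
combined with the Lorentz scaling $\|f^{\alpha}\|_{L^{s,\infty}}=\|f\|_{L^{\alpha s,\infty}}^{\alpha}$ and the generalized weak H\"older inequality for products of weak-$L^{\delta_i}$ functions, yields
\[
\|\chi_Q W^{-1/q}\|_{L^{q',\infty}(\ud x/|Q|)}\lesssim \prod_{i=1}^m \|\chi_Q w_i^{-1/p_i}\|_{L^{\delta_i,\infty}(\ud x/|Q|)}^{\delta_{m+1}/q}.
\]
Multiplying by $\big(\frac{1}{|Q|}\int_Q W\big)^{1/q}$ and extracting the common exponent $\delta_{m+1}/q$, the right-hand side is exactly the $A_{\vec p,\vec r}^{\mathcal R}$ testing functional raised to the power $\delta_{m+1}/q$. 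Thus $W\in A_{q}^{\mathcal R}$ with constant at most $[\vec w]_{A_{\vec p,\vec r}^{\mathcal R}}^{\delta_{m+1}/q}$.

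For the ``in particular'' part, I would apply the first assertion to the reduced tuple $(w_1,\ldots,w_{m-1},1)$ with exponents $(p_1,\ldots,p_{m-1},r_m)$ and the same $\vec r$. A short computation from the definitions shows that in the reduced problem $\delta'_i=\delta_i$ for $i<m$, the $m$-th factor trivializes because the $m$-th weight is $1$, and the auxiliary parameter $\delta'_{m+1}$ equals $\varrho$; moreover the role of $w^{\delta_{m+1}/p}$ is now played by $\mu=\bigl(\prod_{i=1}^{m-1}w_i^{1/p_i}\bigr)^{\varrho}$. Consequently the first assertion gives $\mu\in A_{(1/r-1)\varrho}^{\mathcal R}$, and the inclusion $A_{(1/r-1)\varrho}^{\mathcal R}\subset A_\infty$ is the standard fact that restricted $A_t^{\mathcal R}$ weights are doubling and satisfy a reverse H\"older inequality.

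The main technical nuisance I anticipate is the handling of the degenerate cases $\delta_i=\infty$ (i.e., $r_i=p_i$), where the corresponding factor becomes an essential supremum of $w_i^{-1/p_i}$ on $Q$ and the weak H\"older inequality requires a brief separate verification; no new idea is needed beyond the two-line argument above.
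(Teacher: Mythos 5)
Your proof is correct and follows essentially the same route as the paper: both verify the single-weight $A_q^{\mathcal R}$ testing condition for $W=w^{\delta_{m+1}/p}$ with $q=(\tfrac1r-1)\delta_{m+1}$ by using the identity $\sum_{i=1}^{m}1/\delta_i=(\tfrac1r-1)-1/\delta_{m+1}$ together with the weak-type H\"older inequality and the Lorentz scaling $\|f^\alpha\|_{L^{s,\infty}}=\|f\|_{L^{\alpha s,\infty}}^\alpha$ (the paper packages these two steps into a single computation via the auxiliary exponents $s_i$, but the content, and the resulting power $\delta_{m+1}/q=r/(1-r)$ of the $A_{\vec p,\vec r}^{\mathcal R}$ constant, is identical). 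Your handling of the ``in particular'' reduction and the observation that $\delta'_{m+1}=\varrho$ and $(w')^{\varrho/p'}=\mu$ for the truncated tuple is also the computation the paper is implicitly relying on.
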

\begin{proof}
Note that
\[
\sum_{i=1}^m \frac 1{\delta_i}= \sum_{i=1}^m \Big(\frac 1{r_i}-\frac 1{p_i}\Big)=\frac 1r- \frac 1{r_{m+1}}-\frac 1p=\Big(\frac 1r-1\Big)-\frac 1{\delta_{m+1}}.
\]
Let $$
s_i= \delta_i \Big[\Big(\frac 1r-1\Big)-\frac 1{\delta_{m+1}} \Big].
$$
Then by H\"older's inequality for weak type Lebesgue spaces,
\begin{align*}
&\Big\| \chi_Q w^{-\frac{\delta_{m+1}}p\cdot  \frac 1{(\frac 1r-1)\delta_{m+1}}}\Big\|_{L^{((\frac1r-1)\delta_{m+1})',\infty }(\frac{\ud x}{|Q|})}\\
&\lesssim \prod_{i=1}^m \Big\|\chi_Q w_i^{- \frac{\delta_{m+1}}{p_i}\cdot  \frac 1{(\frac 1r-1)\delta_{m+1}}}\Big\|_{L^{s_i((\frac1r-1)\delta_{m+1})',\infty }(\frac{\ud x}{|Q|})}\\
&= \prod_{i=1}^m \Big\| \chi_Q w_i^{- \frac{\delta_{m+1}}{p_i}\cdot  \frac 1{(\frac 1r-1)\delta_{m+1}}\cdot (\frac 1r-1)}\Big\|_{L^{\delta_i,\infty }(\frac{\ud x}{|Q|})}^{\frac 1{\frac 1r-1}}=\prod_{i=1}^m \|   \chi_Q w_i^{-\frac 1{p_i}}\|_{L^{\delta_i,\infty }(\frac{\ud x}{|Q|})}^{\frac{r}{1-r}}.
\end{align*}
Thus 
\begin{align*}
&\Big\| \chi_Q w^{\frac{\delta_{m+1}}p\cdot  \frac 1{(\frac 1r-1)\delta_{m+1}}}\Big\|_{L^{(\frac1r-1)\delta_{m+1}}(\frac{\ud x}{|Q|})}\\
&\hspace{3cm}\times\Big\| \chi_Q w^{-\frac{\delta_{m+1}}p\cdot  \frac 1{(\frac 1r-1)\delta_{m+1}}}\Big\|_{L^{((\frac1r-1)\delta_{m+1})',\infty }(\frac{\ud x}{|Q|})}\\
&\lesssim \Big(\frac 1{|Q|}\int_Q w^{\frac{\delta_{m+1}}{p}}\Big)^{\frac 1{\delta_{m+1}}\cdot \frac r{1-r}}\prod_{i=1}^m \|   \chi_Q w_i^{-\frac 1{p_i}}\|_{L^{\delta_i,\infty }(\frac{\ud x}{|Q|})}^{\frac{r}{1-r}}\\
&\le \big( [\vec w]_{A_{\vec p, \vec r}}^{\mathcal R}\big)^{\frac r{1-r}}.
\end{align*}
\end{proof}

Now, we are ready to prove Proposition \ref{impli-g}.

\begin{proof}[Proof Proposition \ref{impli-g}]We start with (a).
	Suppose that $(w_1, \cdots, w_{m-1}, 1)\in A_{\vec p, \vec r}^{\mathcal R}$ and $w_m^{\varrho/{r_m}}\in A_1(\mu)$. Then by definition,
	\begin{align*}
	\sup_Q \Big(\frac 1{|Q|}\int_Q \mu \Big)^{\frac 1{\varrho}}\prod_{i=1}^{m-1} \| \chi_Q w_i^{-\frac 1{p_i}}\|_{L^{\delta_i, \infty}(\frac{\ud x}{|Q|})}<\infty
	\end{align*}
	and 
	\[
	\sup_Q \frac{\frac 1{\mu(Q)}\int_Q w_m^{\varrho/{r_m}}\mu}{\essinf_{Q} w_m^{\varrho/{r_m}} }<\infty.
	\]
	It then follows that 
	\begin{align*}
	&\sup_Q \Big(\frac 1{|Q|}\int_Q w_m^{\varrho/{r_m}}\mu\Big)^{\frac 1{\varrho}}\prod_{i=1}^{m-1} \| \chi_Q w_i^{-\frac 1{p_i}}\|_{L^{\delta_i, \infty}(\frac{\ud x}{|Q|})} \esssup_Q w_m^{-\frac 1{r_m}}\\
	&=\sup_Q\Big[\Big(\frac 1{|Q|}\int_Q \mu \Big)^{\frac 1{\varrho}}\prod_{i=1}^{m-1} \| \chi_Q w_i^{-\frac 1{p_i}}\|_{L^{\delta_i, \infty}(\frac{\ud x}{|Q|})} \Big]\cdot\Big[ \Big(  \frac{\frac 1{\mu(Q)}\int_Q w_m^{\varrho/{r_m}}\mu}{\essinf_{Q} w_m^{\varrho/{r_m}} }\Big)^{1/{\varrho}}\Big]\\
	&\le [(w_1, \cdots, w_{m-1},1)]_{A_{\vec p, \vec r}^{\mathcal R}} [w_m^{\varrho/{r_m}}]_{A_1(\mu)}^{1/{\varrho}}.
	\end{align*}
	On the contrary, suppose that $\vec w\in A_{\vec p, \vec r}^{\mathcal R}$. Then since 
	\[
	\Big(\frac 1{|Q|}\int_Q \mu \Big)^{\frac 1{\varrho}}\prod_{i=1}^{m-1} \| \chi_Q w_i^{-\frac 1{p_i}}\|_{L^{\delta_i, \infty}(\frac{\ud x}{|Q|})}\gtrsim 1
	\]
	and 
	\[
	\Big(  \frac{\frac 1{\mu(Q)}\int_Q w_m^{\varrho/{r_m}}\mu}{\essinf_{Q} w_m^{\varrho/{r_m}} }\Big)^{1/{\varrho}}\ge 1.
	\]
	By the same computation we get that $(w_1, \cdots, w_{m-1}, 1)\in A_{\vec p, \vec r}^{\mathcal R}$ and $w_m^{\varrho/{r_m}}\in A_1(\mu)$ immediately. 
	
	Next we prove part (b). We may without loss of generality assume that $p_m>r_m$ since otherwise it is already included in part (a). Then we have that $W^{\delta_{m+1}/{r_m}}=u_m (M_\mu g)^{-\frac {\delta_{m+1}}{\delta_m}}$ with some $u_m\in A_1(\mu)$. We will prove that 
\begin{equation}\label{eq:main1}
\| \chi_Q w_m^{-\frac 1{p_m}} \|_{L^{\delta_m,\infty}(\frac{\ud x}{|Q|})}\lesssim \frac{\essinf_Q (M_\mu g)^{\frac{1}{\delta_m}}}{\essinf_Q u_m^{\frac{1}{\delta_{m+1}}}}\Big(\frac {\mu(Q)}{|Q|}\Big)^{\frac{1}{\delta_m}}.
\end{equation}
In fact, if we prove \eqref{eq:main1}, then 
\begin{align*}
 &\Big(\frac 1{|Q|}\int_Q w^{\frac{\delta_{m+1}}{p}}\Big)^{\frac 1{\delta_{m+1}}}\prod_{i=1}^m \| \chi_Q w_i^{-\frac 1{p_i}}\|_{L^{\delta_i, \infty}(\frac{\ud x}{|Q|})}\\
 &\lesssim \Big(\frac 1{|Q|}\int_Q \mu W^{\frac{\delta_{m+1}}{r_m}}\Big)^{\frac 1{\delta_{m+1}}}\prod_{i=1}^{m-1} \| \chi_Q w_i^{-\frac 1{p_i}}\|_{L^{\delta_i, \infty}(\frac{\ud x}{|Q|})} \frac{\essinf\limits_Q (M_\mu g)^{\frac{1}{ \delta_m}}}{\essinf\limits_Q u_m^{\frac{1}{\delta_{m+1}}}}\Big(\frac {\mu(Q)}{|Q|}\Big)^{\frac{1}{\delta_m}}\\
 &\le  \Big(\frac 1{|Q|}\int_Q \mu u_m\Big)^{\frac 1{\delta_{m+1}}}\prod_{i=1}^{m-1} \| \chi_Q w_i^{-\frac 1{p_i}}\|_{L^{\delta_i, \infty}(\frac{\ud x}{|Q|})}\frac{1}{\essinf\limits_Q u_m^{\frac{1}{\delta_{m+1}}}}\Big(\frac {\mu(Q)}{|Q|}\Big)^{\frac{1}{\delta_m}}\\
 &\lesssim\Big(\frac {\mu(Q)}{|Q|}\Big)^{\frac{1}{\delta_m} +\frac 1{\delta_{m+1}}} \prod_{i=1}^{m-1} \| \chi_Q w_i^{-\frac 1{p_i}}\|_{L^{\delta_i, \infty}(\frac{\ud x}{|Q|})}\\
 &\le [(w_1, \cdots, w_{m-1},1)]_{A_{(p_1,\cdots, p_{m-1},r_m), \vec r}^{\mathcal R}}.
\end{align*}
Thus it remains to prove \eqref{eq:main1}. By definition,
\[
w_m^{-\frac 1{p_m}}= u_m^{-\frac 1{\delta_{m+1}}} (M_\mu g)^{\frac 1{\delta_m}}\mu^{\frac 1{\delta_m}}.
\]
Thus 
\begin{align*}
\| \chi_Q w_m^{-\frac 1{p_m}} \|_{L^{\delta_m,\infty}(\frac{\ud x}{|Q|})}&\lesssim\| \chi_Q \mu^{\frac 1{\delta_m}} (M_\mu g)^{\frac 1{\delta_m}} \|_{L^{\delta_m,\infty}(\frac{\ud x}{|Q|})}\frac{1}{\essinf\limits_Q u_m^{\frac{1}{\delta_{m+1}}}}\\
&= \| \chi_Q \mu M_\mu g \|_{L^{1,\infty}(\frac{\ud x}{|Q|})}^{\frac 1{\delta_m}}\frac{1}{\essinf\limits_Q u_m^{\frac{1}{\delta_{m+1}}}}.
\end{align*}
By Lemma \ref{lem:doubmu} we know that $\mu$ is doubling. It is well-known that 
\[
\chi_Q  M_\mu g \sim \chi_Q \essinf_Q M_\mu g + \chi_Q  M_\mu (g\chi_{3Q}).
\]
Hence by applying Lemma \ref{lem:saw} to $f=g\chi_{3Q}$ and $v=\mu^{-1}\in A_\infty(\mu)$ and $u=1$ we have 
\begin{align*}
\| \chi_Q \mu M_\mu g \|_{L^{1,\infty}(\frac{\ud x}{|Q|})}&\lesssim   \Big(\frac{\mu(Q)}{|Q|} \Big)\essinf_Q M_\mu g+|Q|^{-1} \| \chi_Q \mu M_\mu (g\chi_{3Q}) \|_{L^{1,\infty}}\\
&\lesssim \Big(\frac{\mu(Q)}{|Q|} \Big)\essinf_Q M_\mu g +  |Q|^{-1} \int_{3Q} |g| \mu \\
&\lesssim \Big(\frac{\mu(Q)}{|Q|} \Big)\essinf_Q M_\mu g, 
\end{align*}
where in the last step we used again that $\mu$ is doubling. The proof is now complete.
	\end{proof}

Now, to finish this section we prove the following result which, in particular, shows that in the case $r_i =1$ for every $i=1,\dots,m+1$ then $A_{\vec p,\vec r}^{\mathcal R} \equiv A_{\vec p}^{\mathcal R}$.

\begin{lemma}\label{red} 
	Let  $q\ge 1$ and $v$ be a weight. For fixed $0<a<1$ and $0\le b<1$, we have that
	\begin{equation}\label{eq:equivnorm}
		\left\|\chi_{Q} v^{-1} \right\|_{L^{q,\infty}(v)}\sim \left\|\chi_{Q} v^{-a} \right\|^{k}_{L^{kq,\infty}(v^{b})},
	\end{equation}
	where $k=\frac{1}{a q'}+\frac{b}{aq}.$ 
\end{lemma}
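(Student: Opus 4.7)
The first thing I would do is unwind the Lorentz weak quasi-norms on both sides of \eqref{eq:equivnorm} so that the claim becomes a statement about a single supremum. For any $c>0$ and any weight $\omega$, the identity $v(x)^{-c}>t \iff v(x)<t^{-1/c}$ together with the substitution $s=t^{-1/c}$ gives
\[
\|\chi_Q v^{-c}\|_{L^{r,\infty}(\omega)} = \sup_{s>0} s^{-c}\, \omega(\{v<s\}\cap Q)^{1/r}.
\]
Applying this on each side of \eqref{eq:equivnorm} and raising the right-hand side to the $k$-th power reduces the claim to
\[
\sup_{s>0} s^{-1}\, v(\{v<s\}\cap Q)^{1/q} \sim \sup_{s>0} s^{-ak}\, v^{b}(\{v<s\}\cap Q)^{1/q},
\]
where a direct computation yields $ak = \frac{1}{q'} + \frac{b}{q} = 1 - \frac{1-b}{q}$.

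For the easy direction LHS $\lesssim$ RHS, I would use the pointwise bound $v^{1-b}\leq s^{1-b}$ on $\{v<s\}$ (valid since $b<1$), which gives $v = v^{1-b}\cdot v^{b} \leq s^{1-b} v^{b}$ on that set. Integrating yields $v(\{v<s\}\cap Q) \leq s^{1-b}\, v^{b}(\{v<s\}\cap Q)$, and raising to the $1/q$ power and multiplying by $s^{-1}$ produces the desired inequality directly, since $-1 + (1-b)/q = -ak$.

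The reverse inequality will be the main obstacle. Set $N := \|\chi_Q v^{-1}\|_{L^{q,\infty}(v)}$, so the hypothesis supplies the a priori bound $v(\{v<t\}\cap Q) \leq N^{q} t^{q}$ for every $t>0$. The plan is to bound $v^{b}(\{v<s\}\cap Q)$ in terms of $N$ by writing $v^{b} = v^{b-1}\cdot v$ and applying the layer-cake formula with respect to the measure $v\,\mathrm{d}x$:
\[
v^{b}(\{v<s\}\cap Q) = \int_{0}^{\infty} v\bigl(\{v^{b-1} > \lambda\}\cap\{v<s\}\cap Q\bigr)\,\mathrm{d}\lambda.
\]
Since $b-1<0$, one has $\{v^{b-1}>\lambda\} = \{v<\lambda^{-1/(1-b)}\}$, so the set in the integrand equals $\{v<\min(s,\lambda^{-1/(1-b)})\}\cap Q$. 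Splitting the $\lambda$-integral at $\lambda_{0} := s^{-(1-b)}$ and plugging the a priori bound into each piece should give $v^{b}(\{v<s\}\cap Q) \leq C\, N^{q}\, s^{q-1+b}$; the convergence of the tail $\int_{\lambda_{0}}^{\infty}\lambda^{-q/(1-b)}\,\mathrm{d}\lambda$ forces $q>1-b$, which is equivalent to $kq = (q-1+b)/a > 0$ -- a condition implicit in the statement so that the right-hand side of \eqref{eq:equivnorm} makes sense. Taking $q$-th roots and recalling $(q-1+b)/q = ak$ then completes the proof.
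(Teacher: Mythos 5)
Your proof is correct and takes a genuinely different route from the paper. The paper first establishes that $\|\chi_Q v^{-1}\|_{L^{q,\infty}(v)}$ is comparable to the ``annular'' supremum $\sup_{t>0} t\, v(\{t < v^{-1}\le 2t\}\cap Q)^{1/q}$, and then exploits that on each annulus $\{v^{-1}\sim t\}$ one has the two-sided pointwise bound $v\sim t^{-1}$; this converts $v$ into $t^{b-1}v^b$ and $\{v^{-1}\sim t\}$ into $\{v^{-a}\sim t^a\}$ simultaneously, so the whole equivalence drops out of a single chain of ``$\sim$'' followed by the substitution $u=t^a$. You instead stay with the sublevel-set representation $\sup_{s>0}s^{-c}\omega(\{v<s\}\cap Q)^{1/r}$, where only the one-sided bound $v\le s$ is available; this forces you to argue the two directions separately. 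Your easy direction (pointwise $v\le s^{1-b}v^b$ on $\{v<s\}$) is essentially half of the paper's annular comparison. Your hard direction—writing $v^b=v^{b-1}\cdot v$, layer-caking $v^{b-1}$ against $v\,\mathrm dx$, and splitting the $\lambda$-integral at $\lambda_0=s^{-(1-b)}$—is the real divergence from the paper and is a clean, self-contained integral computation. You also correctly identify that convergence of the tail requires $q>1-b$, which is exactly $kq>0$, a constraint already implicit in the statement. One small observation in the paper's favor: their chain applies the annular-equivalence step a second time with exponent $kq$, which can be $<1$ for some admissible parameters, so their proof as written implicitly needs a small extension (geometric series before taking the $1/(kq)$-th root); your argument sidesteps that subtlety entirely.
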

\begin{proof} Let $S= \sup_{t>0} t \, v (\{ x\in Q: t<v^{-1}(x)\le 2 t \})^{1/q}$. Since $1/q\le 1$  we have that  
	\begin{align*}
		\,t \, v(\{ x\in Q: t<v^{-1}(x) \})^{1/q} &\leq\sum_{k=0}^{\infty}  \,t \, v( \{ x\in Q: 2^{k} t<v^{-1}(x)\le 2^{k+1} t \})^{1/q}\\
		&\leq S  \sum_{k=0}^{\infty} 2^{-k}= 2 S.
	\end{align*}
	Then $\left\|\chi_{Q} v^{-1} \right\|_{L^{q,\infty}(v)}\sim S.$ Now with this estimate, equivalence \eqref{eq:equivnorm} follows directly. In fact, since $0<a<1$, we have that
	\begin{align*}
		\left\|\chi_{Q} v^{-1} \right\|_{L^{q,\infty}(v)}&\sim\sup_{t>0}tv(\{ x\in Q: v^{-1}(x)\sim t \})^{1/q} \\
		&\sim \sup_{t>0}t[t^{(b-1)}v^{b}(\{ x\in Q: v^{-a}(x)\sim t^{a} \})]^{1/q} \\
		&\sim\sup_{t>0}t^{\frac{1}{q'}+\frac{b}{q}}v^{b}(\{ x\in Q: v^{-a}(x)\sim t^{a} \})^{1/q} \\
		&\sim\sup_{t>0}\left(tv^{b}(\{ x\in Q: v^{-a}(x)\sim t \})^{1/kq} \right)^k
	\end{align*}
	where $k=\frac{1}{a q'}+\frac{b}{a q}.$
\end{proof}
\begin{remark}\label{rem:dfnequiv}
Observe that if we apply the previous lemma systematically with $q=p'_i$, $a=\frac{1}{p_i}$ $(i=1,\dots, m)$ and $b=0$ (so that $k=1$) we have
\[
 \| \chi_Q w_i^{-1} \|_{L^{p_i', \infty}\big(\frac{w_i}{|Q|}\big)}\sim \| \chi_Q w_i^{-\frac 1{p_i}} \|_{L^{p_i', \infty}\big(\frac{\ud x}{|Q|}\big)}.
\]
With this at hand it is obvious to see that if  $r_i =1$ for every $i=1,\dots,m+1$ then $A_{\vec p,\vec r}^{\mathcal R} \equiv A_{\vec p}^{\mathcal R}$.
\end{remark}

	\section{Applications}\label{sec:applications}

To show the first application, we need to introduce some notation. For brevity, we will present it in the bilinear case, though the same arguments can be extended to the multilinear case. Before stating the theorem, we will define an operator as bi-sublinear if it simultaneously satisfies the following conditions:
$$|T(f_1+f_2,g)|\leq |T(f_1,g)|+|T(f_2,g)| $$ and 
$$|T(f,g_1+g_2)|\leq |T(f,g_1)|+|T(f,g_2)|, $$
where we assume that $|T|$ is increasing in each coordinate, and $\supp f_1\cap \supp f_2=\emptyset$ and  $\supp g_1\cap \supp g_2=\emptyset$, respectively.   
\begin{theorem} \label{thm:app}
Let $T$ be a bi-sublinear operator. Assume that there exists $0<\alpha \le1$ such that for every measurable Lebesgue sets $E$ and $F$  in $\mathbb{R}^n$ (with finite measure) and $\lambda_1,\lambda_2>0$ we have that  
\begin{equation}\label{hypo}
|T(\lambda_1 \chi_E,\lambda_2 \chi_F)(x)|\lesssim (\lambda_1 \lambda_2 )^{\alpha}  \mathcal{M}(\chi_E,\chi_F)(x)^{\alpha},\quad a.e.\quad  x\in \R^n.
\end{equation}
Then 
\[
\| |T(f_1,f_2)|^{1/\alpha}\|_{L^{\frac{1}{2},\infty}(v_1^{1/2}v_2^{1/2})}\lesssim \prod_{i=1}^2 \|f_i\|_{L^{1,\frac{\alpha}{q}}(v_i)},
\]
for all $(v_1, v_2)\in A_{\vec{\frac{1}{2}}}$ and $q>2\alpha$.  
\end{theorem}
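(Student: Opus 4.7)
The plan is to apply Theorem \ref{thm:main-general} to $g=|T(f_1,f_2)|^{1/\alpha}$ with the initial data $p_1=p_2=q$, $r_1=r_2=r_3=1$, and $\alpha_1=\alpha_2=\alpha$; we may assume $q\ge 1$ (otherwise the same scheme goes through with $p_1=p_2=1$ and $\alpha_i=\alpha/q$). Under the standing assumption $q>2\alpha$, all the hypotheses of Theorem \ref{thm:main-general} are met: $r_i\le p_i$ since $q\ge 1$; $\alpha_i\in(0,p_i]$ since $\alpha\le q$; and $1/p=2/q>0=1/r_3'$. By Remark \ref{rem:dfnequiv} the starting class collapses to $A_{(q,q)}^{\mathcal R}$ and the target class $A_{(1,1),\vec r}$ to $A_{\vec{\frac 12}}^{\mathcal R}=A_{\vec{\frac 12}}$, so the conclusion of Theorem \ref{thm:main-general} coincides with the desired estimate.

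The bulk of the work is to verify the hypothesis of Theorem \ref{thm:main-general} for this specific $g$, namely that for every $\vec w\in A_{(q,q)}^{\mathcal R}$ and every pair of non-negative $f_1,f_2$,
\[
\bigl\||T(f_1,f_2)|^{1/\alpha}\bigr\|_{L^{q/2,\infty}(w_1^{1/2}w_2^{1/2})}\lesssim \|f_1\|_{L^{q,\alpha}(w_1)}\|f_2\|_{L^{q,\alpha}(w_2)}.
\]
I would slice each $f_i$ along its dyadic level sets $G_k^{(i)}:=\{2^k<f_i\le 2^{k+1}\}$ and combine the bi-sublinearity of $T$ on the disjoint supports with the monotonicity of $|T|$ and hypothesis \eqref{hypo} to derive the pointwise bound
\[
|T(f_1,f_2)|\lesssim \sum_{k,l}2^{(k+l)\alpha}\,\mathcal M\bigl(\chi_{G_k^{(1)}},\chi_{G_l^{(2)}}\bigr)^{\alpha}.
\]
Using the scaling identity $\|F^{1/\alpha}\|_{L^{q/2,\infty}(w)}=\|F\|_{L^{q/(2\alpha),\infty}(w)}^{1/\alpha}$ and the fact that $q>2\alpha$ makes $L^{q/(2\alpha),\infty}$ normable (Hunt), the countable sum may be split. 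Each summand then equals $\|\mathcal M(\chi_{G_k^{(1)}},\chi_{G_l^{(2)}})\|_{L^{q/2,\infty}(w)}^{\alpha}$; since $\vec w\in A_{(q,q)}^{\mathcal R}$ is equivalent to $\mathcal M:L^{q,1}(w_1)\times L^{q,1}(w_2)\to L^{q/2}(w_1^{1/2}w_2^{1/2})$ being bounded, and $\|\chi_E\|_{L^{q,1}(w_i)}\sim w_i(E)^{1/q}$, each term is dominated by $(w_1(G_k^{(1)})w_2(G_l^{(2)}))^{\alpha/q}$. The double sum factorizes as $\bigl[\sum_k 2^{k\alpha}w_1(G_k^{(1)})^{\alpha/q}\bigr]\bigl[\sum_l 2^{l\alpha}w_2(G_l^{(2)})^{\alpha/q}\bigr]$, and each factor is controlled by $\|f_i\|_{L^{q,\alpha}(w_i)}^{\alpha}$ via the standard discretization of the Lorentz norm.

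The main obstacle is the failure of the triangle inequality in $L^{p,\infty}$ when $p\le 1$: on such spaces one cannot uniformly split a countable sum. The condition $q>2\alpha$ in the statement is precisely what places the ambient space $L^{q/(2\alpha),\infty}$ strictly above the normability threshold, thereby enabling the subadditive splitting of the pointwise bound above; this is where the restriction on $q$ originates. A secondary technical point is that hypothesis \eqref{hypo} is stated only for indicator pairs, and must be transferred to general $f_i\ge 0$; the dyadic slicing together with bi-sublinearity on disjoint supports and the monotonicity of $|T|$ performs this extension routinely.
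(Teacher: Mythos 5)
Your proposal follows essentially the same route as the paper: dyadic slicing of $f_1,f_2$, the pointwise bound from bi-sublinearity plus hypothesis \eqref{hypo}, normability of $L^{q/(2\alpha),\infty}$ to split the sum, the $A_{\vec p}^{\mathcal R}$ bound for $\mathcal M$ on indicators, and then an appeal to Theorem \ref{thm:main-general} with $p_1=p_2=q$, $r_i=1$, $\alpha_i=\alpha$. One small caveat: your parenthetical patch for $q<1$ (using $p_i=r_i=1$ and $\alpha_i=\alpha/q$) does not actually help, since then the extrapolation step is vacuous and the discretized sum $\sum_j 2^{\alpha j}v_i(G_j)^{\alpha}$ is not controlled by $\|f_i\|_{L^{1,\alpha/q}(v_i)}^{\alpha}$ when $\alpha/q>\alpha$; the paper silently assumes $q\ge 1$ so that $1\le r_i\le p_i$ holds, and your main argument correctly runs under that same implicit assumption.
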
 
 \begin{proof}
 Observe that even that  if $T$ is sublinear, in general $S(f_1,f_2)=|T(f_1,f_2)|^{1/\alpha}$ could not be for $\alpha<1$. We fix $q>2\alpha$ and we take $p=\frac{q}{2}$ and $p_1=p_2=q$. We claim that for every pair of weights $(w_1,w_2)\in A^{\mathcal{R}}_{\vec{p}}$ the operator $S$ satisfies that

  \begin{equation}\label{eq:S}
  \|S (f_1,f_2)\|_{L^{p,\infty}(w_1^{1/2} w_2^{1/2})}\lesssim \prod_{i=1}^2 \|f_i\|_{L^{p_i,\alpha}(w_i)}.
  \end{equation}
  Obtaining that estimate the conclusion will follow by applying Theorem \ref{thm:main-general}
  Fix non-negative functions $f_1$ and $f_2$. For $j, i \in \mathbb{Z}$  we will use the notation $E_{j}=\{x\in \mathbb{R}^n : 2^{j}\le f_1(x)<2^{j+1} \}$,  $F_{i}=\{x\in \mathbb{R}^n : 2^{i}\le f_2(x)<2^{i+1} \}$. 
  
 Then using the hypothesis of $T$ it is not difficult to check that  
 $$|T(f_1,f_2)|\lesssim \sum_{i,j\in \mathbb{Z}} 2^{\alpha j} 2^{\alpha i} \mathcal{M}(\chi_{E_j},\chi_{F_i})^{\alpha}.$$ 
 
 Then, using that $\frac{p}{ \alpha}>1$ 
  
 $$\|S (f_1,f_2)\|^{\alpha}_{L^{p,\infty}(w_1^{1/2} w_2^{1/2})}=\| |T(f_1,f_2)| \|_{L^{\frac{p}{\alpha},\infty}(w_1^{1/2} w_2^{1/2})}$$
 
 $$\lesssim \sum_{i,j\in \mathbb{Z}} 2^{\alpha j} 2^{\alpha i} \|\mathcal{M}(\chi_{E_j},\chi_{F_i})^{\alpha}\|_{L^{\frac{p}{\alpha},\infty}(w_1^{1/2} w_2^{1/2})}.$$
 
 But since $$\|\mathcal{M}(\chi_{E_j},\chi_{F_i})^{\alpha}\|_{L^{\frac{p}{\alpha},\infty}(w_1^{1/2} w_2^{1/2})}=\|\mathcal{M}(\chi_{E_j},\chi_{F_i})\|^{\alpha}_{L^{p,\infty}(w_1^{1/2} w_2^{1/2})},$$ and since $(w_1,w_2)\in A^{\mathcal{R}}_{\vec{p}}$ we can obtain that 
 
 $$\|S (f_1,f_2)\|^{\alpha}_{L^{p,\infty}(w_1^{1/2} w_2^{1/2})}\lesssim \sum_{i,j\in \mathbb{Z}} 2^{\alpha j} 2^{\alpha i} w_1(E_j)^{\frac{\alpha}{p_1}} w_2(F_i)^{\frac{\alpha}{p_2}}. $$
 
 From that using standard arguments it is not difficult to check that the previous sum is essentially bounded by $\left(\prod_{i=1}^2 \|f_i\|_{L^{p_i,\alpha}(w_i)}\right)^{\alpha}$ and claim (\ref{eq:S}) follows.
 
 \end{proof}

Now, we will apply the previous theorem to the following particular operator

$$N(f,g)(x)=\sup_{\lambda>0} \lambda |\{y\in \mathbb{R}^n \setminus \{0\}: \frac{|f(x+y)| |g(x+y)|}{|y|^{2n}}>\lambda\}|^{2} $$ 

We observe that the operator $N$ can be regarded as an specific instance, considering the additive group, of a bilinear variant of the (sublinear) maximal operators discussed in \cite{CLM-R}. In that work, intriguing applications to ergodic theory were derived employing an extension of the restricted extrapolation theorem within the linear setting, applied to weights belonging to one-sided Sawyer's classes. In fact using similar ideas as in \cite{CLM-R} (see also \cite{Assa} and \cite{C-DS}) we could deduce endpoint estimates for a discrete variant of $N$. However, for the sake of brevity and since we can already show the potential of Theorem \ref{thm:app} with the operator $N$, we remain in that particular continuous context.  We would like also mention that other bilinear variants (in the discrete setting)  with negative results already were considered in \cite{Assa-Bu}.  

\begin{corollary}\label{cor:app} Let $q>1$. Then 
\[
\| N(f_1,f_2)\|_{L^{\frac{1}{2},\infty}(v_1^{1/2}v_2^{1/2})}\lesssim \prod_{i=1}^2 \|f_i\|_{L^{1,\frac{1}{2q}}(v_i)},
\]
\end{corollary}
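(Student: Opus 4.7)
The plan is to apply Theorem~\ref{thm:app} to the operator $T(f,g):=N(f,g)^{1/2}$ with the choice $\alpha=1/2$; then $|T|^{1/\alpha}=N$, the hypothesis $q>2\alpha$ becomes $q>1$ (matching the range stated in the corollary), and the Lorentz exponent $\alpha/q=1/(2q)$ is precisely what the right-hand side of the corollary requires. Thus it suffices to check two things: that $T$ is bi-sublinear in the sense of Section~\ref{sec:applications}, and that $T$ satisfies (\ref{hypo}) with $\alpha=1/2$.

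For the bi-sublinearity, the starting point is the identity
\[
N(f,g)(x) \;=\; \Big\|\tfrac{f(x+\cdot)\,g(x+\cdot)}{|\cdot|^{2n}}\Big\|_{L^{1/2,\infty}(\ud y)},
\]
which follows directly from $\|u\|_{L^{1/2,\infty}}=\sup_{\lambda>0}\lambda\,|\{|u|>\lambda\}|^{2}$. Taking the square root gives the useful reformulation $T(f,g)(x)=\sup_{\lambda>0}\lambda^{1/2}|\{y\neq 0:|f(x+y)g(x+y)|/|y|^{2n}>\lambda\}|$. When $\supp f_1\cap\supp f_2=\emptyset$ the level sets associated to $f_1 g$ and $f_2 g$ are pointwise disjoint, so their Lebesgue measures add exactly, and since the supremum of a sum is bounded by the sum of the suprema one gets $T(f_1+f_2,g)\le T(f_1,g)+T(f_2,g)$ with constant one; the symmetric estimate in the second slot is identical, and the monotonicity of $T$ in each argument is clear from the definition.

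For the pointwise bound (\ref{hypo}), the change of variable $\lambda=\lambda_1\lambda_2/r^{2n}$ in the definition of $N$ reduces the expression to
\[
N(\lambda_1\chi_E,\lambda_2\chi_F)(x) \;=\; \lambda_1\lambda_2\sup_{r>0}\frac{|E\cap F\cap B(x,r)|^{2}}{r^{2n}}.
\]
Combining the elementary bound $|E\cap F\cap B|^{2}\le |E\cap B|\,|F\cap B|$ with $|B(x,r)|\sim r^{n}$ controls the supremum by a constant multiple of $\mathcal{M}(\chi_E,\chi_F)(x)$; taking square roots produces $T(\lambda_1\chi_E,\lambda_2\chi_F)(x)\lesssim (\lambda_1\lambda_2)^{1/2}\mathcal{M}(\chi_E,\chi_F)(x)^{1/2}$, which is exactly (\ref{hypo}) with $\alpha=1/2$. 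The only subtle point in the whole argument is the decision to work with $T=N^{1/2}$ rather than $N$ itself: $N$ is only quasi-subadditive on disjoint supports (with a constant strictly larger than one), and the $1/2$-power is precisely what the weak-$L^{1/2}$ structure of $N$ in the shift variable requires in order to achieve bi-sublinearity with constant one, as Theorem~\ref{thm:app} demands. Once (i) and (ii) are in hand, Theorem~\ref{thm:app} delivers the corollary directly.
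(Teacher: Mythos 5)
Your argument is correct and follows exactly the paper's own route: set $T=N^{1/2}$, verify bi-sublinearity via the disjoint-support additivity of the level-set measures, check hypothesis (\ref{hypo}) with $\alpha=1/2$ by the substitution reducing $N(\lambda_1\chi_E,\lambda_2\chi_F)$ to a supremum of $|E\cap F\cap B(x,r)|^2/r^{2n}$ followed by the inequality $|E\cap F\cap B|^2\le|E\cap B||F\cap B|$, and then invoke Theorem~\ref{thm:app}. You have merely filled in the computational details that the paper compresses into ``it is not difficult to check'' and ``rescaling and using H\"older.''
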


\begin{proof}
Observe that if we denote $$T(f_1,f_2)=N(f_1,f_2)^{1/2}.$$  It is not difficult to check that $T$ is a bi-sublinear operator. On the other hand for fixed measurable sets $E$ and $F$
 $$T(\lambda_1 \chi_E ,\lambda_2 \chi_F)(x)\lesssim \sup_{\lambda} \lambda \int_{|y|<\frac{(\lambda_1 \lambda_2)^{1/2n}}{\lambda^{1/n}}} \chi_{E}(x+y)\chi_F(x+y).$$

So, rescaling and using H\"older we can check that $T$ satisfies (\ref{hypo})  with $\alpha=\frac{1}{2}$. Therefore, $T$ verifyes  the hypothesis of Theorem \ref{thm:app}.

\end{proof}

Finally, we will describe another abstract result focussing on the natural fact that if $p>1$ then the space $L^{p,\infty}(\mu)$  is a normable space, while at the endpoint, only if some $p_i=1$ in the relationship $\frac{1}{p}=\frac{1}{p_1}+\dots +\frac{1}{p_m}$  will imply that $p<1$, and therefore arguments using duality cannot be directly applied. 

\begin{theorem} Assume that for each $j=1,\dots,$ we have an operator \newline $T_j (f_1,\dots,f_m)$  defined for functions $f_i \in L^{p_i}(w_i)$ for $i=1,\dots,m$  with  $\frac{1}{p}=\frac{1}{p_1}+\dots +\frac{1}{p_m}$. Such that for some $p>1$ and for every vector weight  $\vec w=(w_1, \cdots, w_m)\in A_{\vec p}^{\mathcal R}$ we have 
\begin{equation*}
\|T_j (f_1,\dots,f_m)\|_{L^{p,\infty}(\prod_{i=1}^m w_i^{p/{p_i}})}\leq c_{j}(\vec w) \prod_{i=1}^m \|f_i\|_{L^{p_i,1}(w_i)},
\end{equation*}   
where the non-negative scalars $c_{j}(\vec w)$ satisfy that  $\sum_{j=1}^{\infty} c_{j}(\vec w) \leq C(\vec w) <\infty$. 
Then, the operator $T=\sum_j T_j$ satisfies 
\[
\|T(f_1,\dots,f_m) \|_{L^{1/m,\infty}(\prod_{i=1}^m v_i^{1/{m}})}\lesssim \prod_{i=1}^m \|f_i\|_{L^{1,1/{p_i}}(v_i)},
\]
for all $(v_1, \cdots, v_m)\in A_{\vec{\frac{1}{m}}}$,   
\end{theorem}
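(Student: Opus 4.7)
The plan is to perform summation \emph{before} extrapolation, reducing the theorem to a single application of Theorem \ref{thm:main}. The decisive ingredient is that $p>1$ makes the Lorentz space $L^{p,\infty}(\prod_{i=1}^m w_i^{p/p_i})$ normable: an equivalent Banach norm (for example, Calder\'on's norm defined via a supremum of averages over finite-measure sets) is available, and with respect to it the ordinary triangle inequality is at our disposal.

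First, I would fix $\vec w=(w_1,\dots,w_m)\in A_{\vec p}^{\mathcal R}$. Rewriting the hypothesis on each $T_j$ via this equivalent norm (at the expense of an absolute constant depending only on $p$) and summing over $j$, the absolute summability $\sum_j c_j(\vec w)\le C(\vec w)<\infty$ ensures convergence of the series $T=\sum_j T_j$ in $L^{p,\infty}$ and yields
\[
\|T(f_1,\dots,f_m)\|_{L^{p,\infty}(\prod_{i=1}^m w_i^{p/p_i})}\lesssim C(\vec w)\prod_{i=1}^m \|f_i\|_{L^{p_i,1}(w_i)}.
\]
This bound, valid uniformly for every $\vec w\in A_{\vec p}^{\mathcal R}$, is precisely the hypothesis required in Theorem \ref{thm:main}, applied to the pair $(T(f_1,\dots,f_m),(f_1,\dots,f_m))$. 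An immediate invocation of Theorem \ref{thm:main} then produces the claimed endpoint estimate
\[
\|T(f_1,\dots,f_m)\|_{L^{1/m,\infty}(\prod_{i=1}^m v_i^{1/m})}\lesssim \prod_{i=1}^m \|f_i\|_{L^{1,1/p_i}(v_i)}
\]
for every $(v_1,\dots,v_m)\in A_{\vec{\frac{1}{m}}}$.

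The essential obstacle is the initial aggregation step, and this is precisely the reason why the assumption $p>1$ cannot be dropped. For $p\le 1$ the space $L^{p,\infty}$ is only $p$-normed, and the quasi-triangle inequality does not propagate through an infinite sum of restricted weak-type estimates without extra quantitative control on the $c_j(\vec w)$. The strength of the present theorem lies exactly in this decoupling: summation is carried out at the normable exponent $p>1$, and only afterwards is the extrapolation machinery of Theorem \ref{thm:main} used to transport the aggregated estimate down to the non-normable endpoint $1/m<1$, where a direct summation of the $T_j$ would not be feasible.
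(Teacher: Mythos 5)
Your proposal is correct and matches the paper's argument exactly: the paper proves this theorem in one sentence by observing that $L^{p,\infty}$ is normable (satisfies Minkowski's inequality) for $p>1$, allowing the $T_j$ estimates to be summed before invoking Theorem \ref{thm:main}. Your write-up is simply a fuller account of that same two-step reduction.
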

The previous theorem is a direct consequence of Theorem \ref{thm:main} and the fact that for $p>1$,  $L^{p,\infty}(\prod_{i=1}^m w_i^{p/{p_i}})$ satisfies Minkowski's inequality. Of course, we could apply the previous theorem in the case that  we have only an operator $T$.

We finish this presentation with a last remark. 
\begin{remark}\label{rema}
The operator $N$ in Corollary \ref{cor:app} also shows the sharpness of Theorem \ref{thm:main-general}, in the sense that in general the right hand side of (\ref{eq:conclu}) it is not possible to change the $L^{1,1/p_i}(w_i)$ norm by $L^{1}(w_i)$ norm. In fact, assuming $n=1$, $N$ does not satisfy an estimate $L^1 \times L^1 \rightarrow L^{\frac{1}{2},\infty}$. If that is the case choosing $f=g$ we could deduce that the operator $N^*(f)=\sup_{\lambda>0} \lambda |\{y\in \mathbb{R} \setminus \{0\}: \frac{|f(x+y)|}{|y|}>\lambda\}|$ is weak type $(1,1)$. And it is well known that this is not the case, see for instance \cite{A-B-M}.
\end{remark}

\section*{Acknowledgement}

The three authors express profound gratitude to M. J. Carro. Her invaluable assistance, insightful comments, and engaging discussions were indispensable for the completion of this work. In particular, we are indebted to her for generously sharing with us a preliminary version of Theorem \ref{thm:offdia-g} and also the preprint \cite{cr:cr}, and for bringing  a variant of Remark \ref{rema} to our attention.

\end{document}